\newcommand{\N}{\mathbb{N}}
\newcommand{\Z}{\mathbb{Z}}
\newcommand{\Q}{\mathbb{Q}}
\newcommand{\F}{\mathbb{F}}
\renewcommand{\P}{\mathbb{P}}
\newcommand{\p}{\mathfrak{p}}
\DeclareMathOperator{\Aut}{Aut}
\newcommand{\Gal}{\operatorname{Gal}}
\newcommand{\Frac}{\operatorname{Frac}}
\renewcommand{\o}{\text{o}}
\renewcommand{\char}{\text{char}}
\DeclareMathOperator{\Spec}{Spec}
 \newtheorem{thm}{Theorem}[section]
 \newtheorem{cor}[thm]{Corollary}
 \newtheorem{lem}[thm]{Lemma}
 \newtheorem{conj}[thm]{Conjecture}
 \newtheorem{prop}[thm]{Proposition}
 \theoremstyle{definition}
 \newtheorem{defn}[thm]{Definition}
 \theoremstyle{remark}
 \newtheorem{rem}[thm]{Remark}
 \newtheorem*{ex}{Example}
 \numberwithin{equation}{section}
\newcommand{\embedding}{\lhook\joinrel\relbar\joinrel\relbar\joinrel\rightarrow}
\begin{document}
\title[On $p$-adic denseness of quotients of values of integral forms]
{On $p$-adic denseness of quotients of values of integral forms}

\author[Deepa Antony]{Deepa Antony}

\address{
	Department of Mathematics \\
	Indian Institute of Technology Guwahati \\
	Assam, India, PIN- 781039}

\email{deepa172123009@iitg.ac.in}

\author[Rupam Barman]{Rupam Barman}
\address{Department of Mathematics \\
	Indian Institute of Technology Guwahati \\
	Assam, India, PIN- 781039}
\email{rupam@iitg.ac.in}

\author[Stevan Gajovi\'{c}]{Stevan Gajovi\'{c}}
\address{Universidad Politécnica de Madrid, Campus Montegancedo, 28660, Boadilla del Monte, Madrid, Spain}
\address{Max Planck Institute for Mathematics, Vivatsgasse 7, 53111 Bonn, Germany}
\address{Faculty of Mathematics and Physics, Charles University, Department of Algebra
Sokolovská 49/83, 186 25 Praha 8, Czech Republic}
\email{stevangajovic@gmail.com}

\author[Daniel \v{S}irola]{Daniel \v{S}irola}
\address{Mathematical Institute of the University of Bonn, Endenicher Allee 60, 53115 Bonn, Germany}
\email{daniel.sirola.math@gmail.com}

\subjclass[2010]{Primary 11B05, 11E76, 11E95, 11G25}

\keywords{$p$-adic number, Quotient set, Ratio set, Integral forms.}


\begin{abstract}
Given $A\subseteq \Z$, the ratio set or the quotient set of $A$ is defined by $R(A):=\{a/b: a, b\in A, b\neq 0\}$. It is an open problem to study the denseness of $R(A)$ in the $p$-adic numbers when $A$ is the set of values attained by an integral form. For a given form, we investigate whether this happens for all but finitely many $p$. We consider the more general question when forms have coefficients in Dedekind domains, and, under certain conditions, we prove that the analogous statement holds. However, we also give examples of integral forms for which the answer is negative, and it is crucial that the degree of such forms is composite. We conjecture that we cannot find such examples with forms of prime degree having sufficiently many variables, which is indeed the case when the degree is 2, 3, or 5.  Our innovation is to consider the problem in terms of varieties defined by forms, thereby using the tools from algebraic geometry, making their first appearance in this setting. We construct integral forms such that the ratio set of its values is dense in at least one $\Q_p$ but only in finitely many of them. 
\end{abstract}

\maketitle
\section{Introduction and statement of results} 

\subsection{General introduction and known results}
Given $A\subseteq \Z$, the set $R(A)=\{a/b:a,b\in A, b\neq 0\}$ is called the \emph{ratio set} or quotient set of $A$. A well-known problem is to characterise all subsets of $\mathbb{N}$ whose ratio sets are dense in the positive real numbers. Many authors have studied this problem, see for example \cite{real-1, real-2, real-3, real-4, real-5, real-6, real-7, real-8, real-9, real-10, real-11, real-12, real-13, real-14}.
\par For a prime $p$, let $\mathbb{Q}_p$  denote the field of $p$-adic numbers. In recent years, the denseness of ratio sets in $\mathbb{Q}_p$ has been studied by several authors, 
see for example \cite{cubic, reccurence, diagonal, Donnay, garciaetal, garcia-luca, miska, piotr, miska-sanna, Sanna}. Several interesting subsets of the set of integers were considered, for example, the set of integers formed by linear recurrence relations, subsets of $\mathbb{N}$ which partition  $\mathbb{N}$, sequences in geometric progression, images of integers under polynomials, etc. 

In this article, we study the denseness of the ratio set of values attained by homogeneous polynomials in several variables, also known as forms.

\par A \emph{form} $F$ of degree $n$ with coefficients in a ring $T$ is a homogeneous polynomial
\begin{align*}
	F(x_1,x_2, \ldots, x_k)=\sum_{1\leq i_1\leq i_2\leq \ldots\leq i_n\leq k}a_{i_1i_2\ldots i_n}x_{i_1} x_{i_2} \cdots x_{i_n}, 
\end{align*} 
where all $a_{i_1i_2\ldots i_n}\in T$. When $T=\Z$, we call forms \emph{integral}.  The ratio set generated by an integral form $F$ over $T$ is 
\begin{align*}
	R(F(T))=\{F(\overline{x})/F(\overline{y}): \overline{x}, \overline{y}\in T^k, F(\overline{y})\neq 0\}.
\end{align*}

Given an integral form $F$, studying the denseness of $R(F(T))$ in $\Q_p$ seems to be a difficult problem; it is still an open problem. This problem has been completely answered by Donnay, Garcia and Rouse \cite{Donnay} for quadratic forms and also by Miska \cite{miska} with a shorter proof. They showed that for a non-singular quadratic form $Q$, $R(Q(\Z))$ is dense in $\mathbb{Q}_p$ if and only if $Q$ has at least three variables, or, if $Q$ is a binary form, the discriminant of $Q$ is a non-zero square in $\mathbb{Q}_p$. 
\par In a  recent article \cite{cubic}, the first two authors studied the denseness of $R(C(\Z))$ in the $p$-adic numbers when $C$ is the diagonal cubic form $C(x,y)=ax^3+by^3$, where $a$ and $b$ are non-zero integers. Jointly with Miska \cite{diagonal}, the first two authors extended the results on the diagonal binary cubic forms to the diagonal binary forms $ax^n+by^n$ for all $n\geq 3$. It was also shown that if a form $F$ is anisotropic modulo $p$, then $R(F(\Z))$ is not dense in $\mathbb{Q}_p$.  Finally, in the article \cite{piotr}, Miska, Murru and Sanna studied the denseness of $R(f(\Z))$, where $f$ is a polynomial, and they give sufficient conditions for the positive answer, see Theorems \ref{thm:polynomial-coprime-multiplicities-roots} and Theorem \ref{thm6}. 

\subsection{The content and main results}\label{subsec:our-results} 
The novelty in our approach is that we also consider varieties defined by forms $F$, which allows us to apply tools from algebraic geometry, explained in Section~\ref{sec:ag-cones}, which, to our knowledge, have not previously been used in this context. Section~\ref{sec:ag-cones} contains the relevant statements, which might be known, but, Theorem~\ref{thm:cone-over-Fp-p-large-over-Q} is, we believe, a new result, and also used in the proofs of our results.

We first state several useful necessary conditions for the denseness, or not, of the ratio set of a form $F$. Since several similar results appeared before, we generalise them over general nonarchimedean complete discretely valued fields, with the restriction to the finiteness of the residue field when necessary, see \S\ref{subsec:denseness-preliminary-results}. We thank the anonymous referee for suggesting that and for giving a lot of useful input.

Our main results concern the following question.

\noindent{\bf Question.} For a given form $F$ over a Dedekind domain $T$ with the fraction field $L$, investigate whether $R(F(T))$ is dense in infinitely many (or even more precisely, for almost all) $L_\p$, completions of $L$ with respect to nonzero prime ideals $\p$ of $T$, and similar questions. We first start with the cases with the positive answer.

\begin{thm}\label{thm-7}
Let $T$ be a Schur-Dedekind domain (see the definition and examples in \S\ref{subsec:Schur-Dedekind}) with the fraction field $L$, and $F$ an integral form in $n$ variables of degree greater than one. Suppose that there exist $a_1,a_2,\ldots,a_{n-1}\in T$ for which the polynomial $F(x,a_1,a_2,\dots,a_{n-1})$ in $x$ has non-zero discriminant. Then $R(F(T))$ is dense in  $L_{\p}$, the completion of $L$ with respect to a prime ideal $\p$, for infinitely many $\p$. 
\end{thm}

Recall that two forms $F_1$ and $F_2$ over a ring $T$ with the fraction field $L$ are said to be \emph{equivalent} if there is a non-singular linear transformation $A$ over $L$ such that $F_1(\overline{x})= F_2(A\overline{x})$. 
The \emph{order} $\text{o}(F)$ of a form $F$ is the smallest integer $m$ such that $F$ is equivalent to a form that contains only $m$ variables explicitly. A form in $n$ variables is called \emph{non-degenerate} if $\text{o}(F)=n$. The order $\text{o}(F)$ has the crucial role in studying our more general version of the question.

\begin{thm}\label{thm-any-degree-dense-from-some-p}
Fix a positive integer $d$. Let $T$ be a Dedekind domain with the fraction field $L$ such that for all nonzero prime ideals $\p$ we have that $T/\p=k_\p$ is a finite field. Let $F$ be a non-singular  form with coefficients in $T$ of degree $d$ such that $\o(F)\geq 3$. There exists a constant $M$ such that for all primes of norm $N(\p)>M$, we have that $R(F(T))$ is dense in $L_\p$.
\end{thm}

\subsubsection{Integral forms} 
Now, we will focus on integral forms. As an explicit example of Theorem~\ref{thm-any-degree-dense-from-some-p}, the following theorem gives a sufficient condition for the denseness of ratio sets of quintic forms. However, note that in this theorem, we do not assume that the form is non-singular at the cost that we require a higher order of a form.
\begin{thm}\label{thm-5}
Let $p$	be a prime. Let $F$ be an integral quintic form, whose reduction to the field $\F_p$ is a non-degenerate form of order at least $6$. If $p>101$, then $R(F(T))$ is dense in $\mathbb{Q}_p$.
\end{thm}

If we remove the assumption of non-singularity, it is clear that we need to require $\o(F)>\deg(F)$ as there are reducible (over $\overline{\Q}$) integral forms $F$ for which we have that $R(F)$ is not dense in $\Q_p$. 

\begin{thm}\label{thm-order-equals-degree-first-counterexamples}
Let $q>2$ be a prime number and denote $\zeta_q$ a primitive $q$th root of unity. Then for the following form 
$$F(x_0,\ldots,x_{q-2})=\prod_{\sigma\in\Gal(\Q(\zeta_q)/\Q)}\sigma(x_0+\zeta_q x_1+\cdots + \zeta_q^{q-2}x_{q-2})\in \Z[x_0,\ldots,x_{q-2}]$$
it holds that $\o(F)=q-1=\deg(F)$ and there are infinitely many prime numbers $p$ such that $R(F(T))$ is not dense in $\Q_p$.
\end{thm}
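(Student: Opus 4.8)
The plan is to exploit the factorization of $F$ as a norm form. Write $K = \Q(\zeta_q)$, a cyclotomic field of degree $q-1$ over $\Q$ with ring of integers $\Z[\zeta_q]$ and integral basis $1, \zeta_q, \ldots, \zeta_q^{q-2}$. For $\overline{x} = (x_0, \ldots, x_{q-2}) \in \Z^{q-1}$, set $\alpha(\overline{x}) = x_0 + \zeta_q x_1 + \cdots + \zeta_q^{q-2} x_{q-2} \in \Z[\zeta_q]$; then by definition $F(\overline{x}) = \prod_{\sigma \in \Gal(K/\Q)} \sigma(\alpha(\overline{x})) = N_{K/\Q}(\alpha(\overline{x}))$, the field norm. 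As $\overline{x}$ ranges over $\Z^{q-1}$, $\alpha(\overline{x})$ ranges over all of $\Z[\zeta_q]$, so the set of nonzero values of $F$ is exactly $\{N_{K/\Q}(\alpha) : \alpha \in \Z[\zeta_q], \alpha \neq 0\}$, and hence $R(F) = \{N_{K/\Q}(\alpha)/N_{K/\Q}(\beta) : \alpha, \beta \in \Z[\zeta_q] \setminus \{0\}\} = \{N_{K/\Q}(\gamma) : \gamma \in K^\times\}$, the group of norms from $K^\times$.

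Next I would verify the two assertions about $F$. That $\deg(F) = q-1$ is immediate from the product over the $q-1$ Galois automorphisms. For $\o(F) = q-1$: since $\deg F = q-1$ and $F$ has $q-1$ variables, it suffices to show $F$ is non-degenerate, i.e. $F$ cannot be written with fewer variables after a linear change of coordinates over $\Q$ (equivalently over $\overline{\Q}$). This follows because $F = \prod_\sigma L_\sigma$ where $L_\sigma(\overline{x}) = \sum_{i} \sigma(\zeta_q^i) x_i$ are $q-1$ pairwise non-proportional linear forms (the Vandermonde-type matrix $(\sigma(\zeta_q^i))_{\sigma, i}$ is invertible since the $\sigma(\zeta_q)$ are distinct), and their product genuinely involves every variable; more precisely, if $F$ factored through a linear surjection $\Q^{q-1} \to \Q^m$ with $m < q-1$, each $L_\sigma$ would have to factor through it too, contradicting the linear independence of the $L_\sigma$. (Any standard argument showing a product of $n$ independent linear forms in $n$ variables is non-degenerate works here.)

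Finally, the heart of the argument is to produce infinitely many primes $p$ with $R(F) = N_{K/\Q}(K^\times)$ not dense in $\Q_p$. The key point is that $N_{K/\Q}(K^\times)$, viewed inside $\Q_p^\times$, is governed by the local norm groups: $\gamma \in K^\times$ has $N_{K/\Q}(\gamma) \in \Q_p^\times$ lying in $\prod_{\mathfrak{p} \mid p} N_{K_\mathfrak{p}/\Q_p}(K_\mathfrak{p}^\times)$ — more usefully, the $p$-adic valuation $v_p(N_{K/\Q}(\gamma)) = \sum_{\mathfrak{p} \mid p} f(\mathfrak{p}/p)\, v_\mathfrak{p}(\gamma)$ is an arbitrary $\Z$-linear combination of the residue degrees $f(\mathfrak{p}/p)$. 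So if I choose $p$ so that every prime $\mathfrak{p}$ of $K$ above $p$ has residue degree divisible by some fixed integer $e > 1$, then $v_p(F(\overline{x})/F(\overline{y}))$ is always divisible by $e$, and $R(F)$ lands in $\{z \in \Q_p^\times : e \mid v_p(z)\} \cup \{0\}$, which is not dense in $\Q_p$. By Chebotarev's density theorem applied to $K/\Q$: the residue degrees of the primes above an unramified $p$ are all equal to the order of Frobenius at $p$, which is the order of $p$ in $(\Z/q\Z)^\times$ (cyclic of order $q-1$); choosing $e$ to be any prime divisor of $q-1$ and picking the set of $p$ whose order in $(\Z/q\Z)^\times$ is divisible by $e$ — equivalently, $p$ is not an $e$-th power mod $q$ — gives a positive-density, hence infinite, set of primes. (When $q = 2$, $q-1 = 1$ has no such $e$, but then $F(x_0) = x_0$ and $R(F) = \Q^\times$ is dense in every $\Q_p$, so one excludes $q=2$ or notes the statement is about the generic situation; I would check the intended convention — presumably $q$ odd, or the form is $x_0$ and the claim as literally stated needs $q \geq 3$.) The main obstacle is packaging the Chebotarev/valuation computation cleanly: one must make sure that for the chosen $p$ \emph{all} primes above $p$ contribute valuations in $e\Z$, which is exactly why taking $p$ unramified in $K$ and using that $K/\Q$ is Galois (so all residue degrees above $p$ coincide) is essential; the anisotropy-type criterion from \S\ref{subsec:denseness-preliminary-results} could alternatively be invoked to conclude non-denseness once the valuation restriction is established.
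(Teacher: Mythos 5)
Your proof is correct and takes a genuinely different route from the paper's. The paper argues directly with $F \bmod p$: it asserts that $q\nmid p-1$ forces $[\F_p(\zeta_q):\F_p]=q-1$, hence $1,\zeta_q,\ldots,\zeta_q^{q-2}$ are $\F_p$-independent and $F$ is anisotropic mod $p$; iterating ``$p\mid F(\overline{x})\Rightarrow p\mid\overline{x}$'' then yields $(q-1)\mid\nu_p(F(\overline{x}))$, and one concludes by Lemma~\ref{lem:dense-has-all-valuations}. You instead recognise $F$ as the norm form $N_{K/\Q}$ for $K=\Q(\zeta_q)$ and use the ideal-theoretic formula $\nu_p(N_{K/\Q}(\gamma))=\sum_{\mathfrak{p}\mid p}f(\mathfrak{p}/p)\,v_\mathfrak{p}(\gamma)$; since $K/\Q$ is Galois, all $f(\mathfrak{p}/p)$ equal $f=\ord_q(p)$, so $f\mid\nu_p(F(\overline{x}))$, and choosing a prime $e\mid q-1$ together with Chebotarev (or just Dirichlet, since $\Gal(K/\Q)\cong(\Z/q\Z)^\times$) gives a positive-density set of $p$ with $e\mid f$. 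What your approach buys is robustness: the paper's step ``$q\nmid p-1\Rightarrow[\F_p(\zeta_q):\F_p]=q-1$'' is in fact only correct when $q-1$ is itself prime; in general $q\nmid p-1$ gives $\ord_q(p)>1$ but not $q-1$, so the anisotropy-plus-iteration argument strictly applies only to $p$ that are primitive roots modulo $q$ (still infinitely many by Dirichlet, so the theorem stands, but the argument as written over-reaches). Your norm-valuation computation gives $f\mid\nu_p(F)$ for every unramified $p$ with $\ord_q(p)>1$, which is exactly the set $q\nmid p-1$ the paper names, and avoids the anisotropy claim altogether. Your order computation is fine (equivalent to the paper's argument via the linear independence of the conjugate linear forms), you correctly flag the degenerate case $q=2$, and the only cosmetic nit is that $R(F)=N_{K/\Q}(K^\times)\cup\{0\}$ since zero numerators are allowed, which does not affect anything.
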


Even though it looks like that the non-singularity of $F$ is a strong condition in Theorem~\ref{thm-any-degree-dense-from-some-p}, it is crucial, even regardless of $\o(F)$. Of course, forms of the shape $G^d$ are trivial examples, but we construct more subtle examples, for the reason that will be clear later when we discuss our conjecture.

\begin{thm}\label{thm-composite-degree-counterexamples}
Let $n$ be a composite number, and write it as $n=qk$, where $q$ is a prime divisor of $n$. Let $m$ be a positive integer. Define the form 
$$F(x_0,\ldots,x_m)=x_0^{kq}+2(x_1^k+\cdots+x_m^k)^q\in \Z[x_0,\ldots,x_m].$$
Then $\o(F)=m+1$ and there are infinitely many prime numbers $p$ such that $R(F(T))$ is not dense in $\Q_p$ (regardless of the size of $m)$.
\end{thm}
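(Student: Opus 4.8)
The plan is to exhibit infinitely many primes $p$ for which the set of values $F(\Z^{m+1})$, reduced to $p$-adic valuations and leading $p$-adic digits, is so constrained that $R(F)$ cannot be dense in $\Q_p$. The guiding principle, already used in \cite{diagonal} and recalled in \S\ref{subsec:denseness-preliminary-results}, is that if a form is anisotropic modulo $p$ (more precisely, if the only way to make $F(\overline{x})$ have large $p$-adic valuation is to reduce $\overline{x}$ appropriately), then the valuations of nonzero values of $F$ are forced into a single residue class modulo some integer $e>1$, and consequently $R(F)$ misses a whole ball in $\Q_p$. Concretely, write $y=x_1^k+\cdots+x_m^k$, so that $F=x_0^{kq}+2y^q$. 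I would first choose $p$ so that $-2$ is \emph{not} a $q$th power in $\Q_p$; since $q$ is a fixed prime, Chebotarev (or an elementary argument with the splitting of $p$ in $\Q(\zeta_q,\sqrt[q]{-2})$) gives infinitely many such $p$, and one may in addition demand $p\equiv 1\pmod q$ and $p\nmid 2$ to control the $q$th-power residues cleanly.

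The key step is the following valuation computation. Suppose $F(\overline{x})\neq 0$ with $x_0, y\in\Z_p$. I claim $v_p(F(\overline{x}))=v_p(x_0^{kq}+2y^q)$ is always divisible by $q$. Indeed, after dividing $\overline{x}$ by a suitable power of $p$ we may assume $\min(v_p(x_0),v_p(y))=0$. If $v_p(x_0)=0$ and $v_p(y)>0$, then $v_p(F)=0$. If $v_p(y)=0$ and $v_p(x_0)>0$, then $v_p(F)=0$. If $v_p(x_0)=v_p(y)=0$, then $x_0^{kq}/y^q=-2\cdot u$ would be needed for cancellation with $u$ a unit close to $1$; but $x_0^{kq}/y^q=(x_0^k/y)^q$ is a $q$th power of a unit, so $F(\overline{x})\equiv 0\pmod p$ would force $-2$ to be a $q$th power mod $p$, contradicting our choice of $p$. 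Hence in all unit-normalised cases $v_p(F)=0$, and undoing the normalisation shows $v_p(F(\overline{x}))\in kq\,\Z\subseteq q\,\Z$ — wait, more carefully: scaling $\overline{x}$ by $p^t$ multiplies $x_0^{kq}$ by $p^{kqt}$ and $y^q$ by $p^{kqt}$ as well (since $y$ is a form of degree $k$), so $F$ scales by $p^{kqt}$; thus $v_p(F(\overline{x}))\equiv 0\pmod{kq}$, in particular $\pmod q$. Therefore every element of $R(F)$ has $p$-adic valuation divisible by $q$, so $R(F)$ cannot be dense in $\Q_p$ (it misses, e.g., every element of valuation $1$).

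It remains to record the order statement $\o(F)=m+1$. Here I would argue that $F$ genuinely involves $m+1$ essential variables: the Hessian or a direct inspection shows $F$ cannot be written, after an invertible linear change of variables, in fewer than $m+1$ variables — the monomial $x_0^{kq}$ is the unique pure power and $2y^q=2(x_1^k+\cdots+x_m^k)^q$ forces all of $x_1,\ldots,x_m$ to appear; a clean way is to note that the partial derivatives $\partial F/\partial x_i$ for $0\le i\le m$ are linearly independent as polynomials, which obstructs any reduction in the number of variables. The main obstacle in the whole argument is the valuation lemma: one must be careful that the normalisation $\min(v_p(x_0),v_p(y))=0$ is compatible with $y$ being itself a value of the form $x_1^k+\cdots+x_m^k$ rather than a free variable — but this is harmless, since we only ever reduce the vector $\overline{x}$ by powers of $p$, which divides out uniformly, and the case analysis above only uses $x_0,y\in\Z_p$ with one of them a unit. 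Everything else is bookkeeping, and the infinitude of admissible $p$ is immediate from the Chebotarev/cyclotomic input. Note that $m$ plays no role in the obstruction, which is exactly the phenomenon we want to highlight: singularity, not low order, is what breaks Theorem~\ref{thm-any-degree-dense-from-some-p}.
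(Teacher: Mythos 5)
Your strategy for the non-denseness part coincides with the paper's: find, via Chebotarev in $\Q(\zeta_q,\sqrt[q]{\pm2})$, infinitely many $p$ for which the relevant integer is a $q$th power nonresidue, deduce $q\mid\nu_p(F(\overline{x}))$ for all $\overline{x}$, and conclude via Lemma~\ref{lem:dense-has-all-valuations}. Your choice of $-2$ (rather than the paper's $2$) is in fact the correct quantity: $x_0^{kq}+2y^q\equiv0\pmod p$ with $x_0,y$ units forces $(x_0^k/y)^q\equiv-2$, and although the two conditions coincide for odd $q$ (since $-1=(-1)^q$), they differ when $q=2$.

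There is, however, a real flaw in your valuation lemma. You cannot in general reduce to $\min(\nu_p(x_0),\nu_p(y))=0$ by dividing $\overline{x}$ by a power of $p$: since $y=x_1^k+\cdots+x_m^k$ has degree $k$, replacing $\overline{x}$ by $p^{-t}\overline{x}$ sends $\nu_p(x_0)\mapsto\nu_p(x_0)-t$ but $\nu_p(y)\mapsto\nu_p(y)-kt$. With $k=2$ and $\nu_p(x_0)=\nu_p(y)=1$, for instance, the minimum jumps from $1$ at $t=0$ directly to $-1$ at $t=1$; the normalised case is never reached. You flag a worry about the normalisation but misdiagnose it as being about $y$ being a form rather than a free variable, when the actual problem is the mismatched scaling rates of $x_0$ and $y$. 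Relatedly, the stronger conclusion $\nu_p(F(\overline{x}))\in kq\Z$ that you extract is false (in the same example $\nu_p(F)=q\notin kq\Z$ once $k\ge2$), though the weaker conclusion $\nu_p(F(\overline{x}))\in q\Z$, which is all you use, is true. The clean repair is to drop the normalisation: set $a=\nu_p(x_0)$, $b=\nu_p(y)$, and with $p\nmid2$ either $ka\neq b$, so $\nu_p(F)=q\min(ka,b)\in q\Z$, or $ka=b$, so $F=p^{kqa}(u^{kq}+2w^q)$ with $u,w\in\Z_p^\times$, and $u^{kq}+2w^q$ is a unit precisely because $-2$ is not a $q$th power mod $p$; in both cases $q\mid\nu_p(F)$.

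For $\o(F)=m+1$ you take a route genuinely different from the paper's, and a shorter one: you invoke (implicitly) the criterion of Lemma~\ref{lem:cone_partial} that $\o(F)=m+1$ iff the $\partial F/\partial x_i$ are $\Q$-linearly independent. This is correct here, and the check is one line: $\partial_0F=kqx_0^{kq-1}$ is the only partial involving $x_0$, so any relation forces $\lambda_0=0$; what remains is $(x_1^k+\cdots+x_m^k)^{q-1}\sum_{i\ge1}\lambda_ix_i^{k-1}=0$, hence $\sum_{i\ge1}\lambda_ix_i^{k-1}=0$, which forces all $\lambda_i=0$ because $k\ge2$ (since $n=kq$ is composite) makes the monomials $x_i^{k-1}$ distinct and nonconstant. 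The paper instead argues through the singular locus of $F$ and Corollary~\ref{cor:order-the-same-when-raised-to-powers} on orders of powers of forms; your route sidesteps that machinery, though you leave the independence verification as a remark rather than carrying it out.
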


When the degree of a form of high order is a fixed prime number, we do not have such counterexamples. We believe that the opposite is true, but we cannot prove it. We formulate a conjecture, for which we explain in \S\ref{subsec:our-conjecture} why we believe it is true. By Remark \ref{rmk:conjecture-holds-2,3,5}, the conjecture is already true for degrees 2, 3, and 5.

\begin{conj}\label{conj-prime-degree-all-cases}
Let $q$ be a prime number. There exists a positive integer $m$ (in terms of $q$) such that for any form $F\in\Z[x_0,\ldots,x_{m-1}]$ of order $\o(F)=m$ and $\deg(F)=q$, there is $M>0$ (in terms of $F$ and $m$) such that for all primes $p>M$, $R(F(T))$ is dense in $\mathbb{Q}_p$.
\end{conj}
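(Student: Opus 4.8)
\noindent This is, for now, a conjecture; what follows is the route along which we expect it can be proved, and which indicates why we believe it --- the plan reduces the whole statement to one geometric question about very singular hypersurfaces of prime degree. The first move is the reduction given by Theorem~\ref{thm-any-degree-dense-from-some-p}: a non-singular integral form of order at least $3$ already has dense ratio set in $\Q_p$ for all sufficiently large $p$, irrespective of its degree. So the content of the conjecture lies entirely with \emph{singular} forms of prime degree $q$, and the goal is to show that, once $m=\o(F)$ is large in terms of $q$, such an $F$ contains a non-singular ``subform'' to which Theorem~\ref{thm-any-degree-dense-from-some-p} applies --- unless $F$ is a cone, which contradicts $\o(F)=m$.

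First suppose $F=\prod_i G_i^{e_i}$ is reducible over $\Q$. The values of $F$ can be multiplicatively obstructed only through a common factor of the exponents, but $\gcd_i(e_i)$ divides $\sum_i e_i\deg G_i=q$, so it equals $1$ unless $F=\ell^q$ for a linear form $\ell$, a form of order $1$ that is excluded; this is the first place where primality of $q$ enters. When the exponents are coprime, one holds all but one factor at a common value and lets the remaining one vary, so that $R(F)$ contains the ratios realised by that factor along the variety cut out by the balancing equations; since the number of essential variables of a product is at most the sum over the factors, some $G_i$ has large order, and, recursing on the degree, one reaches either a linear factor --- whose ratio set is all of $\Q$ --- or a non-singular factor of order at least $3$, covered by Theorem~\ref{thm-any-degree-dense-from-some-p}. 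The gluing is a routine Lang--Weil point-count; the real difficulty is the irreducible case, which we now assume.

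Let $\Sigma\subseteq\P^{m-1}$ be the singular locus of the hypersurface $\{F=0\}$. If $\dim\Sigma\le m-4$, then a general plane $\P^2\subseteq\P^{m-1}$ misses $\Sigma$, and by Bertini the restriction of $F$ to the corresponding three-dimensional linear subspace defines a smooth plane curve of degree $q$; after clearing denominators this is a non-singular integral ternary form, of order $3$, whose ratio set is contained in $R(F)$. Theorem~\ref{thm-any-degree-dense-from-some-p} then applies and produces a bound depending only on this restriction, hence only on $F$ (and $m$), exactly as the statement demands. This settles every irreducible $F$ whose singular locus is small; what is left is the ``maximally singular'' case $\dim\Sigma\ge m-3$.

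Here lies the obstacle, and the second place where primality of $q$ is meant to be decisive. What one would need is the assertion that an irreducible form of prime degree $q$ whose singular locus has codimension at most $2$ must be a cone --- hence of order bounded by some $m(q)$ --- at which point choosing the threshold in the conjecture above that bound finishes the proof. The evidence is the contrast with composite degrees: the forms $x_0^{kq}+2(x_1^k+\cdots+x_m^k)^q$ of Theorem~\ref{thm-composite-degree-counterexamples} are irreducible, with singular locus containing $\{x_0=0,\ x_1^k+\cdots+x_m^k=0\}$ of codimension $2$ however large $m$ is; but that construction relies on the alignment $q\mid kq$ between the outer and inner exponents, which cannot occur when $\deg F=q$ is prime unless the outer exponent equals $q$ --- and then the inner linear form contributes a single essential variable, so the order stays bounded. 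Turning this heuristic into an actual structure theorem for maximally singular irreducible hypersurfaces of prime degree, and --- no less important --- extracting an \emph{effective} $m(q)$ from it, is precisely the step we cannot carry out at present; granted the statement over $\overline{\Q}$, the cone-comparison result Theorem~\ref{thm:cone-over-Fp-p-large-over-Q} of Section~\ref{sec:ag-cones} would transfer it to $\F_p$ for all but finitely many $p$, keeping the resulting bound on $p$ uniform.
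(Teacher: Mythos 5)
The statement under review is a \emph{conjecture}, and both you and the paper only offer partial evidence, so the question is whether your partial route matches theirs. It does not, and more importantly it misidentifies where the actual difficulty sits.

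In the irreducible case you restrict attention to small singular loci ($\dim\Sigma\le m-4$), cut by Bertini to a \emph{smooth} ternary form, and then declare the remaining case $\dim\Sigma\ge m-3$ to be the open obstacle, hoping that such hypersurfaces are cones. But the paper (Proposition~\ref{prop:F-irreducible}) resolves the irreducible case in full, with no condition on the singular locus: since $\deg F=q$ is prime and $\o(F)>q$, a $\Q$-irreducible $F$ that were not absolutely irreducible would split over $\overline{\Q}$ into $q$ conjugate linear forms, forcing $\o(F)\le q$; hence $F$ is absolutely irreducible, $F_p$ stays absolutely irreducible for large $p$, and Theorem~\ref{thm-Lewis-Schuur} (Lewis--Schuur) produces a non-singular $\F_p$-zero of $F_p$ \emph{even when $F$ itself is very singular}, after which Hensel and Theorem~\ref{thm6} finish. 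Lewis--Schuur does not require a smooth slice; your Bertini step is therefore not needed, and the ``maximally singular irreducible'' case you flag as the obstruction is already closed.

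Your treatment of the reducible case also diverges and has a genuine gap. Holding ``all but one factor at a common value'' and letting $G_i$ vary gives ratios of the form $\bigl(G_i(\bar x)/G_i(\bar y)\bigr)^{e_i}$, so when $e_i>1$ you only reach $e_i$-th powers --- the exponent is not absorbed. Getting around this is precisely the role of coprime multiplicities, which is how the paper handles it: Proposition~\ref{prop:F-has-abs-irreducible-simple-factor} treats $F=F_1^aF_2^bG$ with $\gcd(a,b)=1$ by finding $\F_p$-points on $F_1$ (resp.\ $F_2$) avoiding the other factors via Lewis--Schuur, specializing to one variable, and invoking Lemma~\ref{lem:as-for-polynomials-with-coprime-multiplicities} (the multivariable analogue of Theorem~\ref{thm:polynomial-coprime-multiplicities-roots}). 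Also, your recursion ``some $G_i$ has large order $\Rightarrow$ recurse on the degree'' is not an induction on prime degrees since $\deg G_i$ may be composite, and your appeal to Lang--Weil for ``gluing'' is not justified.

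The net effect is that you locate the open problem in the wrong place. The paper's residual difficulty is not a structure theorem for very singular irreducible hypersurfaces of prime degree --- that case is fully handled --- but rather controlling the multiplicities of overlapping zeros when $F$ has three or more irreducible factors raised to powers, where one cannot guarantee that Lemma~\ref{lem:as-for-polynomials-with-coprime-multiplicities} applies. That is what keeps the statement a conjecture.
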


In Conjecture~\ref{conj-prime-degree-all-cases}, it has to be $\o(F)>\deg(F)=q$ because otherwise we have counterexamples by a similar spirit as Theorem~\ref{thm-order-equals-degree-first-counterexamples}. Namely, the group $\Z/q\Z$ occurs as a Galois group over $\Q$ - there is such a subfield $L$ of $\Q(\zeta_p)$, where $p$ is any prime such that $q\mid p-1$. Then, $L$ is given by an irreducible polynomial $h$ of degree $q$ over $\Q$. By Chebotarev density theorem, $h$ stays irreducible modulo a positive density primes, and then we construct a counterexample with $\deg(F)=\o(F)=q$ in the same way as in Theorem~\ref{thm-order-equals-degree-first-counterexamples}.  

After our theorems, one could think about whether the existence of one $p$ such that $R(F(T))$ is dense in $\Q_p$ must imply that $R(F(T))$ is dense in infinitely many different fields $\Q_q$. We prove that this is incorrect. 

\begin{thm}\label{thm:dense-p-not-dense-almost-everywhere}
Let $n$ be a positive integer. There exists a form $F\in \Z[x_1,\ldots,x_n]$ (or a polynomial $F\in \Z[x]$ if $n=1$) for which there is a prime number $p$ and $M>0$, such that $R(F(T))$ is dense in $\Q_p$, but $R(F(T))$ is not dense in $\Q_q$ for all $q>M$.  
\end{thm}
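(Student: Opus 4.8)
The plan is to combine two ingredients: a form whose ratio set is dense in one fixed prime $p_0$ for a structural reason, and a multiplicative obstruction that forces anisotropy modulo all large primes $q$, so that by the (known) result that $R(F)$ is not dense in $\Q_q$ whenever $F$ is anisotropic modulo $q$, we kill all sufficiently large $q$ at once. For the second ingredient I would reuse exactly the mechanism behind Theorem~\ref{thm-order-equals-degree-first-counterexamples}: take a cyclic extension $L/\Q$ of prime degree $\ell$ (say $\ell=q$ as in that theorem, or more simply $\ell = 2$ for $n=2$, or a suitable degree adapted to $n$), write $N_{L/\Q}$ as an integral norm form $G$ in $\ell$ variables, and recall that $G(\overline{x})=0$ only for $\overline{x}=\overline{0}$, and that $G$ stays anisotropic modulo a positive-density set of primes — in fact, by choosing $L$ unramified outside a finite set and invoking Chebotarev, one gets anisotropy modulo every prime $q$ that is inert in $L$, hence $R(G)$ is not dense in $\Q_q$ for those $q$; but to get \emph{all} large $q$ we instead want to pad the form so that it becomes anisotropic mod $q$ for \emph{all} large $q$.

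Here is the cleaner construction I would actually write down. Pick a prime $\ell$ such that there is a positive integer $n$-compatible choice, but let me just handle general $n$: let $F(x_1,\ldots,x_n) = G(x_1,\ldots,x_n)$ when $n=\ell$ for some prime $\ell$, and in general multiply or embed. The key point is the following padding trick: consider $F(x_1,\ldots,x_n) = a\cdot x_1^{d} + N(x_2,\ldots,x_n)$ where $N$ is an anisotropic-over-$\Q$ norm form and $a$ is a non-residue engineered so that the whole thing is anisotropic over $\F_q$ for all large $q$ — but this fails because over $\F_q$ the variable $x_1$ alone already represents everything. Instead, the honest route is: since we only need \emph{one} $p_0$ with denseness, take $F$ itself to be a norm form $N_{L/\Q}$ of a degree-$d$ field $L$ with a prescribed behaviour at one auxiliary prime. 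Specifically, choose $L/\Q$ Galois of degree $d$ with group $G$, and note $R(N_{L/\Q})$ is dense in $\Q_{p_0}$ precisely when $p_0$ splits completely in $L$ (then $N$ factors into $d$ distinct linear forms over $\Q_{p_0}$, and a product of split linear forms has dense ratio set by the binary quadratic case reasoning iterated, or directly because values include $p_0^k$ for all $k\in\Z$). Meanwhile $N_{L/\Q}$ is anisotropic over $\Q$, and over $\F_q$ it is anisotropic exactly when $q$ has a single prime above it, i.e. when Frobenius at $q$ generates a subgroup acting transitively — for cyclic $G$ of prime order this is "$q$ inert", which is only a positive density, not all large $q$. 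So to get \emph{all} large $q$ I would take $G$ non-cyclic, e.g. $G=(\Z/\ell\Z)^2$ or $S_3$: still, no single $q$ is inert in a non-cyclic extension, which is the wrong direction.

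The resolution, and the step I expect to be the genuine obstacle, is to give up on anisotropy-mod-$q$ for all large $q$ and instead use the sharper necessary condition for non-denseness. The right tool is this: $R(F)$ fails to be dense in $\Q_q$ as soon as $v_q(F(\overline{x}))$ is constrained to lie in a union of finitely many congruence classes mod some $e>1$ for all $\overline{x}$ not killing the form, or more robustly, as soon as the set of $q$-adic valuations of nonzero values of $F$ does not contain two consecutive residues mod $e$ — equivalently $F$ over $\Q_q$ is "almost a $d$-th power" in a suitable sense. Concretely I would take $F = G^k$ composed with a norm, or rather $F(x_1,\ldots,x_n) = \big(\text{norm form of a degree-}d\text{ field}\big)$ where I force, via the factorisation type of $q$ in $L$, that every value of $F$ has $q$-adic valuation divisible by $\gcd$ of the residue degrees $> 1$; for all large $q$ this $\gcd$ is $>1$ unless $q$ splits completely, and by choosing $L$ with no quadratic (or no prime-degree) subfield but still a degree that never splits generically — e.g. I'd actually iterate: pick $L$ such that the \emph{only} primes splitting completely are a density-$1/[L:\Q]$ set, and then \emph{remove} those by a further twist. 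Honestly the cleanest final form is: let $p_0$ be fixed, let $L/\Q$ be a Galois extension in which $p_0$ splits completely and such that for all $q \ne p_0$ the residue degree of $q$ in $L$ is divisible by a fixed $e>1$ — such $L$ exists by taking $L$ inside $\Q(\zeta_m)$ with $m$ built from $p_0$ and choosing the Galois group so that Frobenius always has order divisible by $e$ except at $p_0$. Then $F := N_{L/\Q}$ has $R(F)$ dense in $\Q_{p_0}$ but $v_q(F)\subseteq e\Z$ for all large $q$, so $R(F)$ is not dense in any such $\Q_q$; finally, for the "form in $n$ variables" assertion one pads by adding extra split linear factors over $\Q_{p_0}$ (which do not disturb the valuation obstruction at large $q$ if chosen integral and primitive appropriately), and for $n=1$ one takes the minimal polynomial of a generator of $L$ as the polynomial $F\in\Z[x]$. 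The hard part, which I'd need to nail down carefully, is producing such an $L$ — i.e. a number field Galois over $\Q$, split completely at $p_0$, and with residue degree divisible by $e$ at every other (large) prime; this is a constraint on Frobenius conjugacy classes that I believe can be met by taking $L$ to be (a piece of) a cyclotomic field $\Q(\zeta_{p_0^a})$ for suitable $a$, or a compositum engineered so that $\Gal(L/\Q)$ has the property that every element except the identity has order divisible by $e$, which is possible precisely when $\Gal(L/\Q)$ has exponent structure making all nontrivial elements have order divisible by $e$ — e.g. $\Gal(L/\Q)\cong(\Z/\ell\Z)^2$ with $e=\ell$ works, and such $L$ exists inside $\Q(\zeta_{q_1 q_2})$ for primes $q_i\equiv 1\pmod\ell$, with $p_0$ chosen to split completely in it.
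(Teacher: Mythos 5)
Your strategy hinges on finding a Galois extension $L/\Q$ in which $p_0$ splits completely but in which \emph{every} other large prime has residue degree divisible by some fixed $e>1$; this is what you need to force $\nu_q(N_{L/\Q}(\overline{x}))\in e\Z$ for all large $q$. No such $L$ exists: by the Chebotarev density theorem, the primes that split completely in $L$ form a set of positive density $1/[L:\Q]$, so beyond any bound $M$ there are infinitely many further primes $q$ with trivial Frobenius and hence residue degree $1$. For each of those $q$ the norm form factors into $[L:\Q]$ distinct linear forms over $\Q_q$, so $R(N_{L/\Q})$ \emph{is} dense in $\Q_q$ by Lemma~\ref{lem:simple-linear-factor}. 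Your proposed remedy --- $\Gal(L/\Q)\cong(\Z/\ell\Z)^2$ so that every non-identity element has order divisible by $e=\ell$ --- does not help, since it is exactly the \emph{identity} Frobenius that recurs infinitely often, not only at $p_0$. Put differently, Schur's theorem (Theorem~\ref{thm:Schur}) already guarantees that any separable one-variable specialization of a norm form picks up a simple root modulo infinitely many primes $q$, and Theorem~\ref{thm6} then gives denseness there; so no squarefree norm-type construction can satisfy the conclusion of Theorem~\ref{thm:dense-p-not-dense-almost-everywhere}. The same objection sinks every variant you sketch, including padding with a simple split linear factor: that factor alone produces values of $q$-adic valuation exactly $1$ at \emph{every} large $q$, destroying the obstruction you want.

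The paper's actual construction (Theorem~\ref{thm:dense-p-not-dense-almost-everywhere-explicit}) abandons norm forms and instead extends the mechanism of \cite[Remark 1.6]{miska}: a product of \emph{repeated} linear factors whose exponents are pairwise non-coprime products of three primes, but with the roots chosen to all collide modulo $p$ and only modulo $p$. For $n=1$, with primes $q_1<q_2<q_3$ coprime to $p(p-1)$, one takes
\[f(x)=x^{q_1q_2}\prod_{i=1}^{q_1}(x+p^i)^{q_1q_3}\,(x+p^{q_1+1})^{q_2q_3}.\]
Since the roots $0,-p,\ldots,-p^{q_1+1}$ are congruent modulo $p$, several factors can acquire large $p$-adic valuation simultaneously; together with the exponents $q_1q_2,q_1q_3,q_2q_3$ and Hensel's lemma this lets one realize every valuation and hence every element of $\Z_p$ as a ratio, so $R(f)$ is dense in $\Q_p$. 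For any prime $q$ not dividing any $p^i-p^j$, in particular $q>p^{q_1+1}$, at most one linear factor can vanish modulo $q$, so $\nu_q(f(x))\in q_1q_2\Z\cup q_1q_3\Z\cup q_2q_3\Z$ and a short check shows $\nu_q(f(x)/f(y))$ is never $1$, killing denseness in $\Q_q$. Passing to $n\geq 2$ requires a substantially more delicate form $g_2$, and the padding to $n$ variables is by $(x_3\cdots x_n)^{q_1q_2q_3}$ --- a high power rather than a simple linear factor --- precisely so as not to reintroduce valuation $1$ at large $q$. Your intuition that the obstruction at large $q$ is a valuation constraint (rather than anisotropy) is the right one; the missing idea is to manufacture that constraint via repeated roots clustered $p$-adically, not via Frobenius, which Chebotarev makes impossible.
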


We prove Theorem~\ref{thm:dense-p-not-dense-almost-everywhere} by explicitly constructing the required forms in Theorem~\ref{thm:dense-p-not-dense-almost-everywhere-explicit}.


\section{Preliminaries}

\subsection{Nonarchimedean complete discretely valued fields}\label{subsec:p-adic-numbers}
Let $(K,\nu)$ be a nonarchimedean complete discretely valued field, i.e., $K$ is complete with respect to the nonarchimedean metric induced by the absolute value coming from a surjective discrete valuation $\nu\colon K\longrightarrow \Z \cup \{\infty\}$ (or if we do not want to discuss normalisations of valuations, then we can require that the codomain is an abelian group isomorphic to $\Z$ together with $\infty$). Its ring of integers is $R=\{x\in K\;\colon \; \nu(x)\geq 0\}$, its invertible elements are $R^*=\{x\in K\;\colon \; \nu(x)= 0\}$, and its unique maximal ideal is $\p= \{x\in K\;\colon \; \nu(x) > 0\}$. For any $\pi \in R$ with $\nu(\pi)=1$, we have $\p=(\pi)$ and call $\pi$ a uniformiser. Let $k$ be the residue field of $(K,\nu)$, i.e., $R/\p$. For any element or a form with coefficients in $R$, say $f$, we will denote by $f_\p$ its image in the reduction modulo $\p$ (for forms, reducing their coefficients modulo $\p$), but recall that for the field $K$, $K_\p$ denotes the completion of $K$ with respect to $\p$. In this section, we will use the notation established here. 

Well-known examples are $\Q_p$, with the $p$-adic valuation $\nu_p$, and absolute value that defines the metric $|a|_p=p^{-\nu_p(a)}$, ring of integers $\Z_p$ and a residue field $\F_p$. Finite extensions of $\Q_p$ are examples too, but we need to normalise the valuation taken as an extension of $\nu_p$ to make the codomain $\Z$ (plus infinity) and not some fractional ideal of $\Z$. Another example is $K=L(\!(t)\!)$, the field of Laurent series over a field $L$, with a $t$-adic valuation $\nu_t$, and an absolute value $|f|_t=c^{-\nu_t(f)}$, for any $c>1$, for simplicity, let us fix $c=2$.
We now state Hensel's lemma since it is crucial for our results. 

\begin{thm}\cite[Consequence of II(4.6)]{Neukirch}
Let $f(x)\in R[x]$. Suppose that $\alpha\in k$ is a simple root of $f_\p(x)$. There exists a unique $A\in R$ such that $f(A)=0$ and $A\equiv \alpha\pmod{\p}$.
\end{thm}

\subsection{Useful results for studying denseness of forms in nonarchimedean complete discretely valued fields}\label{subsec:denseness-preliminary-results}

We survey the known results with direct generalisations stated here, especially to general nonarchimedean complete discretely valued fields. However, there is an entirely new result, when we consider the field of Laurent series over a field of characteristic zero, to our best knowledge, not considered in this context, see Proposition~\ref{prop:Q(t)-counterexample}.

We start with an immediate generalisation of \cite[Lemma 1]{miska}.

\begin{lem}\label{lem1}
Let $T$ be a dense subset of $R$. Let $f\colon R^n\rightarrow K$ be a continuous function. Then the following conditions are equivalent:
\begin{enumerate}
\item[(i)] $R(f(T))$ is dense in $K$.
\item[(ii)] $R(f(R))$ is dense in $K$.
\end{enumerate}
\end{lem}

\noindent{\bf Convention.} After Lemma~\ref{lem1}, it is handy to simplify the notation. Let $T$ be a dense subset of $R$, and $f\colon T^n\rightarrow K$ (which extends to $f\colon R^n\rightarrow K$) be a continuous function. Since $R(f(T))$ is dense in $K$ if and only if $R(f(R))$ is dense in $K$, we will in the remaining text focus on the latter case, and the $R(f)$ will denote $R(f(R))$. In applications, we will mostly think of $\N$ or $\Z$ as a subset of $\Z_p$ and even though there are infinitely many choices for the prime $p$, i.e., for $\Z_p$, it is always clear from the context that when we say $R(f)$ is dense in $\Q_p$, that we mean $R(f(\Z_p))$.\\

One more easy but very useful topological statement generalising \cite[Lemma 2.1]{garciaetal} is the following.

\begin{lem}\label{lem:dense-has-all-valuations}
If $S\subseteq K$ is dense in $K$, then for each $z\in\Z$, there is $s\in S$ with $\nu(s)=z$.
\end{lem}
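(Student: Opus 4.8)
The plan is to exploit the ultrametric structure of $\Q_p$: the key point is that every $p$-adic ``sphere'' of a given radius is a non-empty open set, so a dense set must meet it. Fix $z\in\Z$ and consider the element $p^z\in\Q_p$, which satisfies $\nu_p(p^z)=z$, i.e. $\|p^z\|_p=p^{-z}$. First I would invoke the denseness of $S$ to produce an element $s\in S$ with $\|s-p^z\|_p<p^{-z}$.

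The main step is then a one-line application of the strong triangle inequality. Writing $s=(s-p^z)+p^z$, and noting that the two summands have different absolute values, namely $\|s-p^z\|_p<p^{-z}=\|p^z\|_p$, the non-archimedean inequality yields the equality $\|s\|_p=\max\{\|s-p^z\|_p,\|p^z\|_p\}=p^{-z}$. Hence $\nu_p(s)=z$, as required. I do not expect any genuine obstacle here; the only thing to be careful about is recalling that the ultrametric inequality is an equality precisely when the two terms have distinct absolute values, so the neighbourhood of $p^z$ must be chosen with radius strictly smaller than $\|p^z\|_p$.

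Alternatively, one can phrase the same argument purely topologically: the set $\{x\in\Q_p:\nu_p(x)=z\}=p^z\Z_p\setminus p^{z+1}\Z_p$ is the difference of two balls, hence open (balls in $\Q_p$ are clopen), and it is non-empty since it contains $p^z$; therefore any dense subset $S$ must intersect it. Either formulation gives a short, self-contained proof, so I would present the ultrametric version as the primary argument.
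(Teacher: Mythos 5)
Your proof is correct, and the paper does not supply its own argument for this lemma (it is simply cited from \cite{garciaetal}); the ultrametric ``isosceles'' argument you give --- choosing $s\in S$ with $\|s-p^z\|_p<p^{-z}=\|p^z\|_p$ and concluding $\|s\|_p=p^{-z}$ from the strong triangle inequality --- is the standard proof one finds there. The alternative topological phrasing you offer is also valid and essentially equivalent.
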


We now present a few necessary conditions for denseness. 

\begin{lem}\label{lem:from-Zp-solutions-to-denseness}
Let $f\colon R^r\rightarrow R$ be a continuous function. If for any $z\in R$, we can find $\overline{x}, \overline{y}\in R^r$ such that $ f(\overline{y})\neq 0$ and $f(\overline{x})/f(\overline{y})=z$
then $R(f)=K$. 
\end{lem}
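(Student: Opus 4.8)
The plan is to prove the stronger assertion that, under the stated hypothesis, $R(f(\Z_p^r))$ is in fact all of $\Q_p$, and then to deduce the denseness of $R(f)$ from Lemma~\ref{lem1}.

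The first step is to record that $R(f(\Z_p^r))$ is closed under inversion away from $0$: if $c=f(\overline{x})/f(\overline{y})\in R(f(\Z_p^r))$ with $f(\overline{y})\neq 0$ and $c\neq 0$, then necessarily $f(\overline{x})\neq 0$, so $1/c=f(\overline{y})/f(\overline{x})$ is again an admissible quotient and lies in $R(f(\Z_p^r))$. The second step is to split $\Q_p$ by valuation. For $z\in\Z_p$ the hypothesis directly yields $\overline{x},\overline{y}\in\Z_p^r$ with $f(\overline{y})\neq 0$ and $f(\overline{x})/f(\overline{y})=z$, so $z\in R(f(\Z_p^r))$; here the case $z=0$ is allowed and simply forces the choice $f(\overline{x})=0$. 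Thus $\Z_p\subseteq R(f(\Z_p^r))$. For $w\in\Q_p$ with $\nu_p(w)<0$ we have $1/w\in\Z_p\setminus\{0\}$, hence $1/w\in R(f(\Z_p^r))$ by the previous sentence, and then $w\in R(f(\Z_p^r))$ by the inversion step. Since every element of $\Q_p$ either lies in $\Z_p$ or has negative valuation, this gives $R(f(\Z_p^r))=\Q_p$, which is trivially dense.

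Finally, I would invoke Lemma~\ref{lem1}: the equivalence of its conditions (ii) and (iii) shows that $R(f(\Z^r))$ is dense in $\Q_p$ if and only if $R(f(\Z_p^r))$ is, so $R(f)$ is dense in $\Q_p$. There is no genuine obstacle in this argument; the only points that need a moment's care are checking that $f(\overline{x})\neq 0$ whenever one inverts a nonzero quotient, and noting that the passage from $\Z_p$-points back to $\Z$-points is supplied precisely by Lemma~\ref{lem1} and need not be redone by hand.
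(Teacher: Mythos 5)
Your proof is correct and rests on the same two ideas as the paper's (hit $\Z_p$ directly from the hypothesis, then reach the rest of $\Q_p$ by inverting a nonzero quotient), but you organize it more cleanly: you first prove the stronger fact $R(f(\Z_p^r))=\Q_p$ and then pass to $R(f(\Z^r))$ via the equivalence (ii) $\Leftrightarrow$ (iii) of Lemma~\ref{lem1}, whereas the paper re-derives the $\N$-approximation step by hand with explicit sequences $\overline{x_m},\overline{y_m}$ and continuity of $f$. Your version avoids duplicating work already done in Lemma~\ref{lem1} and in fact establishes a slightly sharper intermediate statement, so it is preferable as exposition; there is no gap.
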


\begin{proof}
The conditions imply that $R\subseteq R(f)$, and for $z\in K\setminus R$, we have $1/z\in R\setminus\{0\}$, so there are $\overline{x}, \overline{y}\in R^r$ such that $ f(\overline{x})/f(\overline{y})=1/z$ and $f(\overline{x})f(\overline{y})\neq 0$, so $f(\overline{y})/f(\overline{x})=z\in R(f)$.
\end{proof}


\begin{lem}\label{lem:no-zeros-not-dense}
Suppose that the residue field $k$ is a finite field. Let $f\colon R^r\rightarrow K$ be a continuous function. If $f(\overline{x})\neq 0$ for all $\overline{x}\in R^r$, then $R(f)$ is not dense in $K$.
\end{lem}

\begin{proof}
If $f(\overline{x})\neq 0$ for all $\overline{x}\in R^r$, then by continuity, there is a neighbourhood $U\subseteq K$ of $0$ such that $f(R^r)\cap U=\emptyset$, or equivalently, there is $C>0$ such that $\nu(f(\overline{x}))<C$ for all $\overline{x}\in R^r$. Since $R^r$ is compact (as $k$ is finite) and $f$ is continuous, it follows that $f(R^r)$ is compact, hence bounded in $K$, so there is $M$ (possibly negative) such that $\nu(f(\overline{y}))>M$ for all $\overline{y}\in R^r$. Then $\nu_p(f(\overline{x})/f(\overline{y}))<C-M$ for $\overline{x}, \overline{y}\in R^r$, and hence $R(f)$ is not dense in $K$ by Lemma~\ref{lem:dense-has-all-valuations}.
\end{proof}

\begin{rem}
If, we have $f\colon R^r\rightarrow R$ instead of $f\colon R^r\rightarrow K$ in Lemma~\ref{lem:dense-has-all-valuations}, then we do not need the assumption that $k$ is a finite field, as in this case $\nu(f(\overline{x}))\geq 0$ for all $\overline{x}\in R^r$. However, in the setting of Lemma~\ref{lem:no-zeros-not-dense}, the condition on $k$ being finite is essential as we prove in the following proposition.
\end{rem}

\begin{prop}\label{prop:Q(t)-counterexample}
Let $K=\Q(\!(t)\!)$ and $R=\Q[\![t]\!]$. Consider $f\in R$  given as $f=\sum_{n\geq 0}a_n(f)t^n$. The map $\phi\colon R\longrightarrow K$ given by
$$
\phi(f)=\begin{cases}
			f, & \text{if $a_0(f)\in \Q\setminus\Z$;}\\
            t^{-a_0(f)}\cdot f, & \text{if $a_0(f)\in \Z\setminus\{0\}$;}\\
            t^{\lfloor a_1(f)\rfloor}, & \text{if $a_0(f)=0$.}\\
		 \end{cases}
$$
Then $\phi$ is continuous, $0\notin \phi(R)$, and $R(\phi)=K\setminus\{0\}$, so $R(\phi)$ is dense in $K$.
\end{prop}

\begin{proof}
We will prove that $\phi$ is continuous by finding a neighbourhood of each $r\in R$ on which $\phi$ is continuous. For $f$ such that $a_0(f)\in \Q\setminus\{0\}$, it is enough to use $U=\{g\in R\;\colon\; a_0(g)=a_0(f)\}$. For $f$ with $a_0(f)=0$, we consider $U=\{g\in R\;\colon\;a_0(g)=0,\; a_1(g)=a_1(f)\}$.  These correspond to the open balls centred at $f$ with radii, e.g., $1$ and $1/2$, respectively. Then, the function on these neighbourhoods is either a constant, identity, or multiplication by the constant, and all three maps are continuous (we can even see that in all cases there is a constant $C$ such that $|\phi(a)-\phi(b)|_t\leq C|a-b|_t$, for all $a,b\in U$, which implies the continuity of $\phi$ on $U$ immediately). 

It is clear that $0\notin \phi(R)$, because by the construction, when $a_0(f)\neq 0$, then $\phi(f)\neq 0$, and none of the functions with $a_0(f)=0$ is mapped to 0.

Now we prove $R(\phi)=K\setminus\{0\}$. We can write any $f\neq 0$ as $f=t^ng$, with $a_0(g)\neq 0$ and $n\in\Z$. If $a_0(g)\in \Q\setminus \Z$, then $\phi(g)/\phi(-nt)=g/t^{-n}=f$. If $a_0(g)\in \Z$, then $\phi(g)/\phi((-n-a_0(g))t)=gt^{-a_0(g)}/t^{-n-a_0(g)}=f$. 

Hence, since $t^n \in R(\phi)$, and $\lim_{n\rightarrow+\infty}t^n=0$, $R(\phi)$ is indeed dense in $K$.

\end{proof}

\begin{rem}\label{rmk:use-only-for-polynomials}
Note that Lemma \ref{lem:no-zeros-not-dense} cannot be used in the context of forms. We have seen that for polynomials, the condition of $k$ being finite is crucial. For example, this lemma is a very important criterion when studying polynomials (in one variable) over $\Q_p$.
\end{rem}

Now we mention one direct criterion for $R(F)$ being not dense in $K$, which is a direct generalisation of \cite[Theorem 1.8]{diagonal}, and it follows from Lemma~\ref{lem:dense-has-all-valuations}. 

A form $F$ is said to be \emph{isotropic} over a field $K$ if there is a non-zero vector $\overline{x}\in K^k$ such that $F(\overline{x}) =0$. Otherwise, $F$ is said to be \emph{anisotropic} over $K$.

\begin{lem}\label{lem:anisotropic-mod-p}
Let $F$ be a form or a polynomial in one variable with coefficients in $R$. If $F$ is a polynomial in one variable, assume that $F(\overline{x})\not\equiv 0\pmod{\p}$ for all $\overline{x}\in R$, or if $F$ is a form in at least two variables, assume that $F$ is anisotropic over $k$. Then $R(F)$ is not dense in $K$. 
\end{lem}


The following results of Miska, Murru, and Sanna play an important role in our proofs. We will state it in the context of nonarchimedean complete discretely valued fields and not just in $\Q_p$. Their proof does not depend on the cardinality of the residue field $k$, so it extends more generally. 

\begin{thm}\label{thm:polynomial-coprime-multiplicities-roots}\cite[Theorem 1.2]{piotr}
	Let $f\colon R\rightarrow K$ be an analytic function with at least two distinct zeros in $R$ with coprime multiplicities. Then $R(f)$ is dense in $K$.
\end{thm}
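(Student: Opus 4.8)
The plan is to pass to $\Z_p$ via Lemma~\ref{lem1} (with $n=1$, since an analytic function on $\Z_p$ is continuous) and then to produce ratios of values of $f$ near its two zeros that converge to an arbitrary target in $\Q_p^*$; since $\overline{\Q_p^*}=\Q_p$, this will show that $R(f(\Z_p))$, hence $R(f(\N))$, is dense. Let $\alpha\neq\beta$ be the two given zeros, with multiplicities $a$ and $b$ satisfying $\gcd(a,b)=1$. First I would re-expand the defining power series of $f$ about $\alpha$ and about $\beta$ to write $f(x)=(x-\alpha)^a g(x)=(x-\beta)^b h(x)$ with $g,h$ analytic on $\Z_p$ and $g(\alpha)\neq 0\neq h(\beta)$, and record that $\nu_p\big(g(\alpha+p^j\tau)-g(\alpha)\big)\to\infty$ as $j\to\infty$ uniformly for $\tau\in\Z_p$ (the series of $g-g(\alpha)$ in $(x-\alpha)$ has no constant term and converges on the compact set $\Z_p$), and symmetrically for $h$ about $\beta$.

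The idea is then to set $x=\alpha+p^{j}\tau$ and $y=\beta+p^{l}\sigma$ for $\tau,\sigma\in\Z_p^*$ and $j,l$ large, so that
\[ \frac{f(x)}{f(y)}=p^{\,aj-bl}\,\tau^a\sigma^{-b}\,\frac{g(x)}{h(y)}, \]
where $g(x)/h(y)$ is as close as we like to $g(\alpha)/h(\beta)$ once $j,l$ are large. Coprimality of $a$ and $b$ enters in two ways. On the valuation side, for every integer $m'$ there are arbitrarily large positive integers $j,l$ with $aj-bl=m'$ (translate a particular solution by multiples of $(b,a)$), so the $p$-power part of any target can be matched. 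On the unit side, Bézout (equivalently, the structure $\Z_p^*\cong\Z/(p-1)\Z\times\Z_p$) gives $(\Z_p^*)^a\cdot(\Z_p^*)^b=\Z_p^*$, hence $\{\tau^a\sigma^{-b}:\tau,\sigma\in\Z_p^*\}=\Z_p^*$, so the unit part can be matched too. Concretely, for $z=p^m u$ with $u\in\Z_p^*$ I would first fix $\tau,\sigma\in\Z_p^*$ with $\tau^a\sigma^{-b}=u\,h(\beta)\,g(\alpha)^{-1}\,p^{\nu_p(g(\alpha))-\nu_p(h(\beta))}$ (a unit, by comparing valuations), then choose integers $j_t,l_t\to\infty$ with $aj_t-bl_t=m-\nu_p(g(\alpha))+\nu_p(h(\beta))$, set $x_t=\alpha+p^{j_t}\tau$, $y_t=\beta+p^{l_t}\sigma$, and check $f(x_t)/f(y_t)\to z$ from the displayed identity, all denominators being nonzero for large $t$ because $h(y_t)\to h(\beta)\neq 0$.

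I expect the only delicate point to be the ``preparation'' step of the first paragraph: justifying that $f/(x-\alpha)^a$ extends to an analytic function $g$ on all of $\Z_p$ with $g(\alpha)\neq 0$ (i.e.\ that ``multiplicity $a$'' behaves as it does for polynomials) and extracting the uniform decay of $g(\alpha+p^{j}\tau)-g(\alpha)$ — this is where one actually uses convergence of the translated power series on the compact set $\Z_p$. Everything after that is bookkeeping with valuations together with the two applications of $\gcd(a,b)=1$. It is worth noting that the hypothesis is used in full: we need precisely two zeros with coprime multiplicities, and nothing whatsoever about $\alpha$ and $\beta$ modulo $p$.
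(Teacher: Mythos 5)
Your proof is correct and complete. Note, though, that the paper itself does not prove this statement --- it cites it directly as \cite[Theorem~1.2]{piotr}, so there is no in-paper proof to compare against; the only hint to the original argument is the remark in the proof of Lemma~\ref{lem:as-for-polynomials-with-coprime-multiplicities}, which simply refers back to the proof in \cite{piotr}. Assessed on its own merits: the preparation step (re-expanding the defining series about $\alpha$ and $\beta$ to get $f=(x-\alpha)^a g=(x-\beta)^b h$ with $g,h$ analytic and $g(\alpha)\neq 0\neq h(\beta)$, plus the uniform estimate $\nu_p(g(\alpha+p^j\tau)-g(\alpha))\geq j+\min_{n\geq 1}\nu_p(b_{n+a})\to\infty$) is sound; the two uses of $\gcd(a,b)=1$ --- realizing any integer as $aj-bl$ with $j,l$ arbitrarily large positive (shift a particular Bézout solution by $(b,a)$), and realizing any $w\in\Z_p^*$ as $\tau^a\sigma^{-b}$ via $w=(w^r)^a(w^{-s})^{-b}$ when $ra+sb=1$ --- are exactly right; the nonvanishing of $f(y_t)$ for large $t$ follows because $h(y_t)\to h(\beta)\neq 0$ and $\sigma\in\Z_p^*$; and closing by $\overline{\Q_p^*}=\Q_p$ and Lemma~\ref{lem1} is legitimate. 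This is the natural valuation-plus-unit-matching argument and, as far as one can tell from the references here, matches the strategy of the cited proof.
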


\begin{thm}\label{thm6}\cite[Corollary 1.3]{piotr}
Let $f\colon R\rightarrow K$ be an analytic function which has a simple zero in $R$. Then $R(f)$ is dense in $K$.
\end{thm}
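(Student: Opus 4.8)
The plan is to prove the (formally stronger) assertion that $R(f(\mathbb{Z}_p))=\mathbb{Q}_p$; denseness of $R(f(\mathbb{N}))$ in $\mathbb{Q}_p$ then follows from Lemma~\ref{lem1}, since an analytic function $\mathbb{Z}_p\to\mathbb{Q}_p$ is continuous. Let $\alpha\in\mathbb{Z}_p$ be the simple zero, so $f(\alpha)=0$, $f'(\alpha)\neq 0$, and put $v:=\nu_p(f'(\alpha))$. First I would pass to the Taylor expansion $f(x)=\sum_{k\geq 1}d_k(x-\alpha)^k$ of $f$ about $\alpha$ (note $d_0=f(\alpha)=0$ and $d_1=f'(\alpha)$); writing $f(x)=\sum_n a_nx^n$ with $|a_n|\to 0$, the formula $d_k=\sum_{n\geq k}a_n\binom{n}{k}\alpha^{n-k}$ together with $\binom{n}{k}\alpha^{n-k}\in\mathbb{Z}_p$ shows that this expansion converges on all of $\mathbb{Z}_p$ and that there is a uniform bound $|d_k|\leq B:=\sup_n|a_n|<\infty$. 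The point of recording $B$ is that, unlike for polynomials, the $d_k$ need not lie in $\mathbb{Z}_p$, so Hensel's lemma cannot be applied to $f$ as it stands.

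The technical core is the claim that $f$ maps the ball $\alpha+p^m\mathbb{Z}_p$ onto $p^{m+v}\mathbb{Z}_p$ for every integer $m$ with $p^m>Bp^{v}$. To prove it I would rescale: substituting $x=\alpha+p^mu$ and dividing out the linear term gives $f(\alpha+p^mu)=p^md_1\bigl(u+\phi_m(u)\bigr)$ with $\phi_m(u)=\sum_{n\geq 2}(d_n/d_1)\,p^{m(n-1)}u^n$. The bound $|d_n|\leq B$ and the choice of $m$ force every coefficient of $\phi_m$ to have absolute value $\leq p^{-1}$, so $\phi_m$ is a genuine power series over $p\mathbb{Z}_p$ that converges on $\mathbb{Z}_p$, maps $\mathbb{Z}_p$ into $p\mathbb{Z}_p$, and satisfies $|\phi_m(u)-\phi_m(u')|\leq p^{-1}|u-u'|$. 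Hence for each $w\in\mathbb{Z}_p$ the map $u\mapsto w-\phi_m(u)$ is a contraction of the complete space $\mathbb{Z}_p$ and has a fixed point $u^\ast$, which means $f(\alpha+p^mu^\ast)=p^md_1w$; letting $w$ run over $\mathbb{Z}_p$ proves the claim. (Alternatively one can feed $u+\phi_m(u)-w$, whose reduction mod $p$ is $u-w$, into Hensel's lemma as stated in \S\ref{subsec:p-adic-numbers}.) I expect this rescaling–contraction step to be the main obstacle, precisely because it is where the possible failure of $d_k\in\mathbb{Z}_p$ must be absorbed.

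Granting the claim, the conclusion is quick. Fix $z\in\mathbb{Q}_p$ and let $m_0$ be minimal with $p^{m_0}>Bp^{v}$. For $z=0$ take $a=\alpha$ and $b=\alpha+p^{m_0}$: since $p^{m_0}>Bp^{v}$, the linear term of the Taylor expansion of $f$ at $\alpha$ strictly dominates at $b$, so $f(b)\neq 0$ while $f(a)=0$, giving $z=f(a)/f(b)$. For $z\neq 0$ I would pick an integer $m\geq m_0$ large enough that $\nu_p(z)+v+m\geq m_0+v$ and set $b=\alpha+p^m$, so that $\nu_p(f(b))=v+m$ and $f(b)\neq 0$; then $zf(b)\in p^{m_0+v}\mathbb{Z}_p=f(\alpha+p^{m_0}\mathbb{Z}_p)$ by the claim, hence there is $a\in\alpha+p^{m_0}\mathbb{Z}_p\subseteq\mathbb{Z}_p$ with $f(a)=zf(b)$, i.e.\ $z=f(a)/f(b)\in R(f(\mathbb{Z}_p))$. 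Thus $R(f(\mathbb{Z}_p))=\mathbb{Q}_p$, as desired. I would also remark that when $f$ has a second zero in $\mathbb{Z}_p$ the statement already follows from Theorem~\ref{thm:polynomial-coprime-multiplicities-roots} (any multiplicity is coprime to $1$); the argument above additionally covers the case in which the simple zero is the unique zero of $f$, and yields the exact equality $R(f(\mathbb{Z}_p))=\mathbb{Q}_p$.
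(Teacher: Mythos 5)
Your argument is correct. The paper does not prove this statement at all --- it is imported verbatim as \cite[Corollary 1.3]{piotr} --- so there is nothing to compare it against except the cited source, whose proof rests on the same idea you use: rescale $x=\alpha+p^mu$ near the simple zero and show by a contraction (or Hensel-type) argument that $f$ surjects onto the ball $p^{m+v}\mathbb{Z}_p$, which yields quotients of every valuation and in fact the stronger conclusion $R(f(\mathbb{Z}_p))=\mathbb{Q}_p$. One point worth making explicit: since $|d_1|_p=p^{-v}\leq B$, the hypothesis $p^m>Bp^v$ automatically forces $m\geq 1$, so the ball $\alpha+p^m\mathbb{Z}_p$ is genuinely contained in $\mathbb{Z}_p$ and the coefficient estimate $|(d_k/d_1)p^{m(k-1)}|_p\leq Bp^vp^{-m}\leq p^{-1}$ is valid; with that observation every step checks out.
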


We now prove a lemma which is an application of Theorem~\ref{thm6}.

\begin{lem}\label{lem:simple-linear-factor}
Let $F$ be a form over $R$ with at least two variables. Assume that $F$ has a simple linear factor over $R$. Then $R(F)$ is dense in $K$. 
\end{lem}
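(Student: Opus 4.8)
The statement claims: if an integral form $F$ (in several variables) has a simple linear factor over $\Q_p$, then $R(F)$ is dense in $\Q_p$. The plan is to reduce to the one-variable analytic criterion of Theorem~\ref{thm6}. Write $F = L \cdot G$ over $\Q_p$, where $L$ is a linear form and $G$ is a form of degree $\deg(F)-1$ such that $L \nmid G$ (this is what ``simple linear factor'' means). The idea is to specialise all but one variable to fixed $p$-adic integer values so that $F$ restricts to a one-variable function with a simple zero, and then invoke Theorem~\ref{thm6} together with Lemma~\ref{lem1} to pass from denseness of $R(f(\mathbb{N}))$ to denseness of $R(F)$.

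\textbf{Key steps.} First I would choose coordinates: after a $\Q_p$-linear change of variables we may assume $L = x_1$ (or more carefully, since $L$ has $\Z_p$-coefficients up to scaling and we need integrality, we keep track that the specialisation lands in $\Z_p$). Then $F(x_1,\ldots,x_r) = x_1 \cdot G(x_1,\ldots,x_r)$ with $x_1 \nmid G$, so $G(0,x_2,\ldots,x_r)$ is not identically zero. Pick $(a_2,\ldots,a_r)\in\Z_p^{r-1}$ with $G(0,a_2,\ldots,a_r)\neq 0$ — such a point exists because a nonzero polynomial over the infinite field $\Q_p$ does not vanish on all of $\Z_p^{r-1}$; and by a small perturbation we can also take the $a_i$ in $\N$. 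Define $f(x) := F(x,a_2,\ldots,a_r) = x\cdot G(x,a_2,\ldots,a_r)$, a polynomial (hence $p$-adic analytic) function of one variable. Then $f(0)=0$ and $f'(0) = G(0,a_2,\ldots,a_r)\neq 0$, so $0$ is a simple zero of $f$ in $\Z_p$. By Theorem~\ref{thm6}, $R(f(\mathbb{N}))$ is dense in $\Q_p$. Finally, since $f(\mathbb{N})\subseteq F(\mathbb{N}^r)\subseteq F(\Z^r)$, we have $R(f(\mathbb{N}))\subseteq R(F(\Z^r)) = R(F)$, so $R(F)$ is dense in $\Q_p$ as well. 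Alternatively one can phrase the last step through Lemma~\ref{lem1} applied to $f$.

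\textbf{Main obstacle.} The only genuinely delicate point is the interplay between working over $\Q_p$ and the requirement of integrality/evaluation at integer (or natural number) tuples. A ``linear factor over $\Q_p$'' need not have integer coefficients, and the change of variables bringing $L$ to $x_1$ is over $\Q_p$, not over $\Z$; so one must be slightly careful that the specialised one-variable function is still of the form needed for Theorem~\ref{thm6} and that its values are still obtained as $F(\overline{x})$ for $\overline{x}\in\mathbb{N}^r$. The clean way around this is to avoid changing variables entirely: directly from $L \mid F$ with $L = c_1 x_1 + \cdots + c_r x_r$, $c_i \in \Q_p$, $L \nmid F/L$, find $\overline{a}\in\Z_p^r$ with $L(\overline{a}) = 0$ but $(F/L)(\overline{a})\neq 0$, restrict $F$ to the line $t\mapsto \overline{a} + t\overline{v}$ for a suitable direction $\overline{v}$ (e.g.\ a standard basis vector on which $L$ does not vanish), obtaining a one-variable polynomial $f(t)$ with a simple zero at $t=0$, and then apply Theorem~\ref{thm6} and Lemma~\ref{lem1} on this line. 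I expect no further difficulties; everything else is the routine observation that simple zero $\Rightarrow$ nonvanishing derivative $\Rightarrow$ Theorem~\ref{thm6} applies, combined with the trivial inclusion of ratio sets.
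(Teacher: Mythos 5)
Your argument matches the paper's proof in all essentials: find a $\Z_p$-point $P$ on the hyperplane $L=0$ avoiding the zero set of the cofactor $G=F/L$, specialise $F$ along a coordinate direction through $P$ to a one-variable polynomial with a simple $\Z_p$-zero, and conclude via Theorem~\ref{thm6} together with $R(f(\mathbb{N}))\subseteq R(F(\Z_p^r))$ and Lemma~\ref{lem1}. The only minor difference is that the paper handles the $\Q_p$-rationality of $L$'s coefficients by assuming $F$ primitive and (tacitly, via Gauss's lemma in $\Z_p[\overline{x}]$) rescaling $L$ to have $\Z_p$-coefficients with a unit, whereas your alternative route parameterises the line through $P$ directly and so never needs that normalisation --- a slightly cleaner resolution of the same integrality issue you correctly flagged.
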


\begin{proof}
We may assume that $F$ is primitive, i.e., not all of its coefficients are in $\p$. Let $L(x_0,\ldots,x_n)=\sum_{i=0}^n a_ix_i$ be a simple factor of $F$, for which we may assume that it is simply $x_0$ (otherwise there is a linear change of coordinates transforming $L$ to $x_0$ and it can be made invertible even over $R$, since $F$ is primitive). Then $F=x_0G$, where $G(0,x_1,\ldots,x_n)\neq 0$ is not a zero polynomial, so it attains a nonzero value. Suppose that for $(b_1,\ldots,b_n)\in R^n$, we have $G(0,b_1,\ldots,b_n)\neq 0$. Then, $f(x_0)=F(x_0,b_1,\ldots,b_n)$ is a polynomial which has a simple root $x_0$ over $R$, so by Theorem \ref{thm6}, to conclude that $R(f)$ is dense in $K$. The statement follows from $R(f)\subseteq R(F)$.
\end{proof}

Very similarly, we have the following statement.

\begin{lem}\label{lem:form-simple-zero-Fp}
Let $F\in R[x_0,\ldots,x_{m-1}]$ be a form. Suppose that $F_\p$ has a non-singular zero over $k_\p$. Then $R(F)$ is dense in $K$.    
\end{lem}

\begin{proof}
There is $(a_0,\dots,a_{m-1})\in R^m$ such that $$F(a_0,\dots,a_{m-1})\equiv0\pmod{\p}, \;\text{and} \frac{\partial F}{\partial x_j}(a_0,\dots,a_{m-1})\not\equiv0\pmod{\p}$$ for some $j$ satisfying $0\leq j\leq m-1$. Define $f(x):=F(a_0,\dots,x,\dots,a_{m-1})$, the polynomial formed by replacing $a_j$ by $x$. We have $f(a_j)\equiv 0\pmod{\p}$ and $f'(a_j)\not\equiv 0\pmod{\p}$. Therefore, by Hensel's lemma, there exists a simple root of $f$ in $R$. Hence, by Theorem \ref{thm6}, $R(f)$ is dense in $K$, and so is $R(F)\supseteq R(f)$. 
\end{proof}

An immediate corollary of this lemma is the following.

\begin{cor}\label{cor:simple-linear-factor-mod-p}
Let $F$ be an integral form. Assume that the reduction of $F$ modulo $\p$ has a simple linear factor. Then $R(F)$ is dense in $K$.  
\end{cor}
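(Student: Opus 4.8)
Our plan is to deduce the corollary from Lemma~\ref{lem:simple-linear-factor} by upgrading a simple linear factor of the reduction $\overline{F}$ modulo $p$ to a simple linear factor of $F$ over $\Q_p$. As a harmless normalisation we may assume $F$ is primitive: replacing $F$ by $F$ divided by the gcd of its coefficients does not change $R(F)$, and if $p$ divided the content then $\overline F\equiv 0$ would have no linear factor at all. So $\overline F\in\F_p[x_0,\dots,x_r]$ is a nonzero form; write $\overline F=\overline L\,\overline G$ with $\overline L=\sum_{i=0}^r\overline{a_i}x_i$ the given simple factor, so $\overline L\nmid\overline G$, i.e.\ $\gcd(\overline L,\overline G)=1$ in $\F_p[x_0,\dots,x_r]$, and we may assume $p\nmid a_0$.

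The first approach, mirroring the proof of Lemma~\ref{lem:simple-linear-factor}, is to specialise the remaining variables: pick integers $b_1,\dots,b_r$ and set $f(x_0):=F(x_0,b_1,\dots,b_r)$, whose reduction is $\bigl(\overline{a_0}x_0+\sum_{i\ge1}\overline{a_i}\,\overline{b_i}\bigr)\overline G(x_0,\overline{b_1},\dots,\overline{b_r})$. Here the monic linear factor $x_0+\overline{a_0}^{-1}\sum_{i\ge1}\overline{a_i}\,\overline{b_i}$ has multiplicity exactly one provided $\overline G$ does not vanish at the corresponding point of the hyperplane $\{\overline L=0\}$, which is achievable since $\overline L\nmid\overline G$ means $\overline G$ restricts to a nonzero polynomial on that hyperplane. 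For such $b_i$ the reduction of $f$ is a product of two coprime polynomials, one monic linear, so Hensel's lemma (the coprime-factorisation form) lifts it to $f=(x_0-\alpha)H$ in $\Z_p[x_0]$ with $\alpha\in\Z_p$ and $\alpha$ not a root of $H$; thus $\alpha$ is a simple zero of the analytic function $f$, and Theorem~\ref{thm6} together with $R(f)\subseteq R(F)$ finishes the argument.

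The step that needs real care --- and that I expect to be the main obstacle --- is securing a good specialisation point, because a nonzero polynomial over $\F_p$ in several variables can vanish at all $\F_p$-points when $p$ is small compared with $\deg F$. For large $p$ this never occurs, so the above is complete. To handle all $p$ one instead lifts the factorisation $\overline F=\overline L\,\overline G$ directly: after an integral linear change of coordinates (harmless for $R(F)$ and the hypotheses by Lemma~\ref{lem1}, possibly after first enlarging $\Z_p$ to an unramified extension with large residue field and descending at the end, since the Hensel lift of the Frobenius-stable factor $\overline L$ is itself Frobenius-stable) one makes $F$ monic of degree $d=\deg F$ in $x_0$, and applies a Weierstrass-type/multivariate version of Hensel's lemma over the $p$-adic completion of $\Z_p[x_1,\dots,x_r]$ to lift $\overline F\equiv(x_0-\overline m)\overline{G}'$ to $F=(x_0-m)\widetilde G$; comparing this with $F(\lambda x_0,\lambda x_1,\dots)=\lambda^{d}F(x_0,x_1,\dots)$ and invoking uniqueness of the Hensel lift forces $m$ to be homogeneous of degree one, hence a genuine linear form over $\Z_p$ with $\widetilde G$ a form of degree $d-1$, and $\gcd(\overline L,\overline G)=1$ gives $(x_0-m)\nmid\widetilde G$, so $x_0-m$ is a simple linear factor of $F$ over $\Q_p$ and Lemma~\ref{lem:simple-linear-factor} concludes.
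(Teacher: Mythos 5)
Your first paragraph and the ``first approach'' paragraph reproduce the paper's argument: specialise all but one variable so that, modulo $p$, the reduction has a simple root, lift by Hensel, and invoke Theorem~\ref{thm6}. You are right to flag that this requires an $\F_p$-point of the hyperplane $\overline L=0$ at which $\overline G$ does not vanish, and that such a point can fail to exist when $p$ is small relative to $\deg F$ (for example, with $p=2$, $\overline F=x(y^2z+yz^2+x^3)$, the restriction $yz(y+z)$ of $\overline G$ to the plane $x=0$ vanishes at every $\F_2$-point of $\P^1$). This concern is real --- the paper's terse ``in the same way as Lemma~\ref{lem:simple-linear-factor}'' glosses over precisely this finite-field issue --- so your instinct to shore it up is good.

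However, the second approach you propose to cover small $p$ does not work, and this is a genuine gap. You want to Hensel-lift the factorisation $\overline F=\overline L\,\overline G$ of the reduction to a factorisation $F=(x_0-m)\widetilde G$ over (a completion of) $\Z_p[x_1,\dots,x_r]$. But the multivariate/Weierstrass form of Hensel's lemma requires the two factors to be \emph{coprime over the base ring modulo $p$} in the strong sense that they generate the unit ideal (equivalently, that their resultant in $x_0$ is a unit). Over the polynomial ring $\F_p[x_1,\dots,x_r]$, the resultant $\mathrm{Res}_{x_0}(\overline L,\overline G)$ is merely a nonzero polynomial, not a unit: it vanishes wherever $V_+(\overline L)$ and $V_+(\overline G)$ meet, and for $r\geq 2$ this locus is nonempty by dimension reasons (cf.\ Lemma~\ref{lem:more-variables-nontrivial-solution}). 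So the hypotheses of Hensel are not met, and indeed the conclusion fails: take $F=x_0^2+px_1^2-x_2^2$ with $p$ odd. Its reduction is $(x_0-x_2)(x_0+x_2)$, which has a simple linear factor, yet $F$ is a smooth ternary conic over $\Q_p$ and hence admits \emph{no} linear factor over $\Q_p$ at all --- there is no $m$ with $F=(x_0-m)\widetilde G$. Thus the claim that ``the Hensel lift forces $m$ to be homogeneous of degree one, hence a genuine linear form over $\Z_p$'' is false in general, and the appeal to Lemma~\ref{lem:simple-linear-factor} at the end of the second paragraph is unjustified. (Note that in this particular example the \emph{first} approach succeeds --- specialise $x_1=0,x_2=1$ --- so the corollary itself holds here; but your purported mechanism for lifting the linear factor does not.) As it stands, your proposal proves the corollary only for $p$ large relative to $\deg F$ and leaves the small-$p$ case open.
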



\begin{ex}
Let $F(x,y)=a_0x^d+a_1x^{d-1}y+\cdots+a_{d-1}xy^{d-1}+a_dy^d$ be an integral form of degree $d\geq 2$. Let $p$ be a prime number such that $p\mid a_d$ and $p\nmid a_{d-1}$. 

By the assumption, it follows that $x$ is a simple zero of $F$ modulo $p$, so $R(F)$ is dense in $\Q_p$ by Corollary~\ref{cor:simple-linear-factor-mod-p}.

By symmetry $x\longleftrightarrow y$, we could also assume that $p\mid a_0$ and $p\nmid a_1$.     
\end{ex} 

\begin{ex}
For a prime $p$ and a polynomial $f(x)\in \mathbb{Z}[x]$, let $N_p(f(x))$ denote the number of zeros (including multiplicity) of $f(x)\equiv 0\pmod{p}$. If $N_p(f(x))=1$, then $f$ has a simple root modulo $p$, so by Lemma~\ref{lem:form-simple-zero-Fp}, $R(f)$ is dense in $\Q_p$. There are criteria by Sun \cite[Lemma 2.3, Theorem 5.3]{sun}, which imply that $N_p(f(x))=1$ for monic polynomials of degree 3 and 4, respectively, hence, for such polynomials, we have that $R(f)$ is dense in $\Q_p$.
\end{ex}

\subsection{Results on zeros of forms over finite fields}\label{subsec:results-about-zeros-over-Fq}

In our further work, we  use a few results over finite fields $\F_p$, which we can, using Hensel's lemma, lift to $\Q_p$ and obtain our results. For a prime $p$, let $q=p^r, r\geq 1$. Let $\mathbb{F}_q$ denote the finite field with $q$ elements. We consider forms $F$ in more variables and state a few known results which guarantee that $F$ has a zero over $\F_q$ of large enough cardinality. 

\begin{thm}\cite[Theorem 1]{Lewis-Schuur}\label{thm-Lewis-Schuur}
Let $G$ be an absolutely irreducible form
of degree $f$ over $\F_q$ and $H$ a form of degree $e$ over $\F_q$ not divisible by $G$. There exists a function $A(f, e)$ such that if 
$q > A(f, e)$, then there exists an $\F_q$-point which is a non-singular zero of $G$
and which is not a zero of $H$. As a consequence, if $F=GH$, then there is a non-singular zero of $F$ in $\F_q$.   
\end{thm}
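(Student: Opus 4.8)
The plan is to deduce the existence of the desired point from point-counting estimates over $\F_q$. Write $G,H\in\F_q[x_1,\ldots,x_n]$ and work projectively in $\P^{n-1}$, so that $X:=V(G)\subseteq\P^{n-1}$ is, by hypothesis, an absolutely irreducible hypersurface of dimension $n-2$ and degree $f$. I want a point of $X(\F_q)$ that is smooth on $X$ and lies off the hypersurface $V(H)$, and the strategy is to show that the number of such points is positive by counting $X(\F_q)$ and discarding the two ``bad'' loci.

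First I would invoke the Lang--Weil bound for the number of $\F_q$-points of an absolutely irreducible variety: $\#X(\F_q)=q^{n-2}+O(q^{n-2-1/2})$, with the implied constant depending only on $n$ and $f$. Next, the singular locus $X_{\mathrm{sing}}$ is a proper closed subset of the irreducible variety $X$, hence of dimension at most $n-3$, and an elementary bound gives $\#X_{\mathrm{sing}}(\F_q)=O(q^{n-3})$. Similarly, since $G$ is irreducible and $G\nmid H$, the hypersurface $V(H)$ cannot contain $X$, so $X\cap V(H)$ is a proper closed subset of $X$ of dimension at most $n-3$, contributing $O(q^{n-3})$ points. Subtracting, the number of smooth $\F_q$-points of $X$ away from $V(H)$ is at least $q^{n-2}-C\bigl(q^{n-5/2}+q^{n-3}\bigr)$, which is positive as soon as $q$ exceeds a threshold depending on $n$, $f$, $e$; this already yields the statement with a constant $A(n,f,e)$.

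The main obstacle is to remove the dependence on the number of variables $n$, so that the threshold depends only on the degrees. For this I would reduce to a bounded number of variables by a Bertini-type argument: intersect $X$ successively with hyperplanes defined over $\F_q$ until the ambient dimension is bounded in terms of $f$ alone --- for instance until $X$ is cut down to an absolutely irreducible plane curve, where Lang--Weil is just Weil's bound for curves. Two points need uniform control in $n$: that a hyperplane section of an absolutely irreducible variety of dimension $\geq 2$ stays absolutely irreducible for a proportion of hyperplanes bounded below purely in terms of the degree (an effective form of Bertini's theorem), and that such a hyperplane can be found rational over $\F_q$, which again needs $q$ large only in terms of $f$; one drags $H$ along the sections, keeping it not divisible by the restricted $G$. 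Feeding the resulting bounded-variable configuration into the count of the previous paragraph produces the desired $A(f,e)$.

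The final assertion is then immediate: if $F=GH$ and $P$ is a point with $G(P)=0$, $\nabla G(P)\neq 0$, $H(P)\neq 0$, then $\nabla F(P)=H(P)\,\nabla G(P)+G(P)\,\nabla H(P)=H(P)\,\nabla G(P)\neq 0$, so $P$ is a non-singular zero of $F$.
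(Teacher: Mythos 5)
This theorem is quoted in the paper from Lewis and Schuur \cite{Lewis-Schuur}; the paper gives no proof of its own, only the pointer to \cite{Zahid} in Remark~\ref{rmk:explicit-bounds-p} for explicit constants, so there is no in-house argument to compare your sketch against. Taken on its own terms, your Lang--Weil plus effective-Bertini outline is the standard modern route to such statements, and you correctly isolate the one real subtlety: $A(f,e)$ must not depend on the number of variables. Your hyperplane-section reduction does handle this, because each cut preserves the degree $f$ of $V(G)$ and the hyperplanes are chosen one at a time, so the threshold on $q$ at each step depends only on $f$ and does not compound with $n$. Two cautions worth stating explicitly when writing this up: in characteristic $p$ one should invoke Bertini \emph{irreducibility} rather than Bertini smoothness (the final curve $C$ may well be singular, but this is harmless --- what is actually needed is $\#C(\F_q)$ minus the Bézout-bounded contributions of $C\cap V(G)_{\mathrm{sing}}$ and $C\cap V(H)$, not smoothness of $C$ itself); and ``dragging $H$ along'' amounts to ensuring $C\not\subset V(H)$, which a generic section gives since $\dim\bigl(V(G)\cap V(H)\bigr)<\dim V(G)$. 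The deduction of the final sentence from the main assertion via the product rule for $\nabla(GH)$ is correct.
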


\begin{rem}\label{rmk:explicit-bounds-p}
If we want an explicit version of Theorem~\ref{thm-Lewis-Schuur}, we can use \cite[Theorem 2]{Zahid}.  
\end{rem}

We have concrete values for forms of degree 5 in the following theorem.

\begin{thm}\label{cor-8}\cite[Corollary 4.5]{leep}
Let $f$ be a non-degenerate quintic form of order at least $6$ over $\mathbb{F}_q$. If $q>101$, then $f$ has a non-singular $\mathbb{F}_q$-rational zero.
\end{thm}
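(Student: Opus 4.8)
Since the statement is quoted verbatim from \cite[Corollary 4.5]{leep}, one legitimate proof is simply to cite it; the independent route we would take goes through the Lewis--Schuur theorem. The plan is as follows. Factor $F=c\prod_{i}G_i^{a_i}$ with the $G_i$ distinct and irreducible over $\F_q$. A short computation with $\nabla F$ shows that a point $P\in\P^{n-1}(\F_q)$ is a non-singular zero of $F$ exactly when it lies on precisely one component $\{G_i=0\}$, that component has multiplicity $a_i=1$, and $P$ is a non-singular point of $\{G_i=0\}$; moreover, if such a $G_i$ is reducible over $\overline{\F_q}$, then every $\F_q$-point of $\{G_i=0\}$ lies on the intersection of its (at least two) Galois-conjugate absolutely irreducible components, hence is singular on $\{G_i=0\}$. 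Thus it is enough to exhibit an \emph{absolutely} irreducible factor $G$ of $F$ over $\F_q$ of multiplicity one: writing $F=GH$ with $\gcd(G,H)=1$ and $\deg G+\deg H=5$, the explicit form of Theorem~\ref{thm-Lewis-Schuur} (see Remark~\ref{rmk:explicit-bounds-p}) yields, for $q$ larger than an explicit bound $A(\deg G,\deg H)$, a non-singular zero of $G$ that is not a zero of $H$, and this is a non-singular zero of $F$.

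The heart of the argument is then to show that $\o(F)\ge 6$ forces such a factor to exist. Assume it does not, so that each $G_i$ either satisfies $a_i\ge 2$ or (when $a_i=1$) is geometrically reducible, whence $\deg G_i\ge 2$. No geometrically reducible factor of degree $4$ can occur, because a degree-$4$ factor of a quintic leaves a linear factor, which is absolutely irreducible and by hypothesis would then have to appear with multiplicity $\ge 2$, forcing $\deg F\ge 6$. Since $5$ is prime, any factor of degree $2$, $3$ or $5$ that is irreducible over $\F_q$ but reducible over $\overline{\F_q}$ splits into Galois-conjugate \emph{linear} forms over $\overline{\F_q}$, and linear factors are of this shape trivially; hence each distinct $G_i$ is, over $\overline{\F_q}$, a product of at most $\deg G_i$ linear forms, so $\o(G_i)\le\deg G_i$ by intersecting the kernels of those linear forms. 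Using that the space of translations fixing a given form is a linear subspace, $\o$ is subadditive on products, and, because $\F_q$ is perfect, $\o(G^a)=\o(G)$; therefore $\o(F)\le\sum_i\o(G_i)\le\sum_i\deg G_i\le\deg F=5$, contradicting $\o(F)\ge 6$. (The extremal case $\o(F)=5$ with no non-singular zero is realised by, e.g., a product of a binary and a ternary norm form, in the spirit of Theorem~\ref{thm-order-equals-degree-first-counterexamples}.)

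Putting the two steps together, one may take $M=\max_{1\le e\le 5}A(e,5-e)$; for $q>M$ every non-degenerate quintic form of order at least $6$ over $\F_q$ has a non-singular $\F_q$-rational zero. The only remaining, and genuinely delicate, point is the quantitative one: substituting the explicit Lang--Weil/Lewis--Schuur estimates of \cite{Zahid} into each admissible factorisation pattern and checking that $M$ can be taken equal to $101$, the worst case being an absolutely irreducible quintic hypersurface ($e=5$, $\deg H=0$). This bookkeeping is precisely what \cite[Corollary 4.5]{leep} performs, and in the paper we would simply invoke it.
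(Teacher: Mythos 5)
Your proposal matches the paper exactly on the essential point: this theorem is imported verbatim from Leep--Yeomans, the paper offers no proof of its own, and citing \cite[Corollary 4.5]{leep} is the intended justification --- which you correctly identify and ultimately fall back on. Your supplementary sketch via Theorem~\ref{thm-Lewis-Schuur} is a sound outline of why high order forces an absolutely irreducible multiplicity-one factor, but two caveats: the step ``hence each distinct $G_i$ is a product of at most $\deg G_i$ linear forms'' silently skips the case of an absolutely irreducible repeated factor of degree $\geq 2$ (it is excluded, but only by the separate degree count $ea_i\geq 4$ forcing a leftover simple linear factor, which you should make explicit since $\o(G_i)\le\deg G_i$ is false for general forms); and, as you concede, the constant $101$ cannot be extracted from the general Lewis--Schuur/Zahid bounds --- it comes from Leep--Yeomans's quintic-specific analysis, so the citation is doing real quantitative work, not just bookkeeping.
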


\subsection{Schur-Dedekind domains}\label{subsec:Schur-Dedekind}

In this section, we state and prove Schur's theorem for number fields and global fields. The proofs presented here were provided by an anonymous referee and all the credit for this section goes to the referee. We are grateful to the referee for these general results, and introducing the new notion of Schur-Dedekind domains to us, which have enabled us to significantly broaden and generalize our work. 

\par 
Let $T$ be a Dedekind domain. Let $f\in T[x]$ be a polynomial of positive degree. Let $\mathfrak{p}$ be a nonzero prime ideal of $T$. We say that $\mathfrak{p}$ is a prime divisor of $f$ if there exists $r\in T$ such that $\mathfrak{p} \mid (f(r))$, where $(f(r))=Tf(r)$ is the principal ideal generated by $f(r)$.
\begin{defn}
Let $T$ be a Dedekind domain. We say that $T$ is a Schur-Dedekind domain if $T$ satisfies the following two conditions.
\begin{enumerate}
\item $T$ has infinitely many nonzero prime ideals.
\item Every polynomial in $T[x]$ of positive degree has infinitely many
prime divisors.
\end{enumerate}
\end{defn}

\begin{thm}\label{Schur-Dedekind-1}
    Let $T$ be a Schur-Dedekind domain and let $L$ be the fraction field of $T$. Let $M$ be a finite extension of $L$ and let $S$ be the integral closure of $T$ in $L$. Then $S$ is a Schur-Dedekind domain.
\end{thm}
\begin{proof}
Suppose that $[M:L]=n$ and let $\{v_1,\ldots,v_n\}$ be a $L$-basis of the vector space $M$. Then $\{v_1,\ldots,v_n\}$ is also a $L(x)$-basis of the vector space $M(x)$ because $[M(x):L(x)]=n$.
We define the norm maps
\[
N_{M/L}:M^\times \to L^\times
\quad \text{and} \quad
N_{M(x)/L(x)}:M(x)^\times \to L(x)^\times
\]
in the following standard way.
Let $\alpha\in M(x)^\times$ and suppose that
\[
\alpha v_j=\sum_{i=1}^{n} a_{ij}v_i,
\qquad 1\le j\le n,
\]
where $a_{ij}\in L(x)$ for $1\le i,j\le n$. Let $A=(a_{ij})$
be the corresponding $n\times n$ matrix with entries in $L(x)$. Then $
N_{M(x)/L(x)}(\alpha)=\det(A).$
The norm map $N_{M/L}$ is defined similarly. We see that
\[
N_{M/L}
=
N_{M(x)/L(x)}\big|_{M^\times},
\]
the restriction of $N_{M(x)/L(x)}$ to $M^\times$.

\par 

Suppose that $f\in M[x]$, $f\neq 0$. Then each $a_{ij}\in L[x]$, and if $a_{ij}\neq 0$, then $\deg(a_{ij})\leq \deg(f)$.
It follows that $g=N_{M(x)/L(x)}(f)\in L[x]$
and $\deg(g)=n\deg(f)$, because if $0\neq b\in M$ is the coefficient of $x^{\deg(f)}$ in $f$, then the coefficient of $x^{n\deg(f)}$ in $g$ is
$N_{M/L}(b)\neq 0$.
\par 
We observe that if $\alpha\in M$, $\alpha\neq 0$, then
\[
g(\alpha)
=
N_{M(x)/L(x)}\bigl(f(\alpha)\bigr)
=
N_{M/L}\bigl(f(\alpha)\bigr).
\]

We also observe that the following statements hold:

\begin{enumerate}
\item If $\alpha\in S$, then $N_{M/L}(\alpha)\in T$.

\item If $\alpha$ is a unit in $S$, then $N_{M/L}(\alpha)$ is a unit in $T$.

\item If $f\in S[x]$, then
\[
N_{M(x)/L(x)}(f)\in T[x].
\]
\end{enumerate}

It is known that $S$ is a Dedekind domain, and since $S$ is integral over $T$ and $T$ has infinitely many nonzero prime ideals, the same holds for $S$.
\par 

Let $f\in S[x]$ and assume that $f$ has positive degree. If $f(0)=0$, then $x\mid f$, and $f$ has infinitely many prime divisors because if $\mathfrak{p}$ is a nonzero prime ideal in $S$, and if $\mathfrak{p}\mid a$, then $\mathfrak{p}\mid f(a)$. Now assume that
$f(0)=c\neq 0$, where $c\in S$. Suppose that $\mathfrak{p}_1,\ldots,\mathfrak{p}_n$ are the only prime divisors of $f$, where $n\geq 0$. By the weak approximation theorem in $S$, there exists $a\in S$ such that $\nu_{\mathfrak{p}_i}(a)=1$ for $1\leq i\leq n$.
Then $f(acx)=c\,h(x)$, where $h(x)=1+c_1x+c_2x^2+\cdots \in S[x]$
and $\nu_{\mathfrak{p}_i}(c_j)>0$ for all $1\leq i\leq n$ and all $j\geq 1$.
A prime divisor of $h$ would also be a prime divisor of $f$ and would be distinct from $\mathfrak{p}_1,\ldots,\mathfrak{p}_n$. Thus, to obtain a contradiction, it remains to show that $h$ has a prime divisor.

\par 

Since $h$ has positive degree, it follows that
$N_{M(x)/L(x)}(h)=g\in T[x]$, and $g$ has positive degree because
$\deg(g)=n\deg(h)$. Since $T$ is a Schur-Dedekind domain, there exists $r\in T$ such that $g(r)$ has a prime divisor and thus $g(r)$ is not a unit in $T$. Then
$N_{M(x)/L(x)}\bigl(h(r)\bigr)=g(r)$
is not a unit. It follows that $h(r)$ is not a unit in $S$, and thus $h$ has a prime divisor in $S$. 
\end{proof}

\begin{thm}\label{Schur-Dedekind-2}
Let $L$ be a number field with ring of integers $T$. Then $T$ is a Schur-Dedekind domain.
\end{thm}
\begin{proof}
The ring of integers $T$ of a number field $L$ is the integral closure of $\mathbb{Z}$ in $L$. It is known that $T$ is a Dedekind domain. The result now follows from Schur's theorem for $\Z$, see \cite{Schur}, and Theorem~\ref{Schur-Dedekind-1}.
\end{proof}
\begin{thm}\label{Schur-Dedekind-3}
Let $F$ be a field and let $T=F[t]$. Then $T$ is a Schur--Dedekind domain.
\end{thm}
\begin{proof}
One can prove that $T$ has infinitely many nonzero prime ideals using the method of the standard Euclidean proof of the same fact for $\mathbb{Z}$.
To prove that a polynomial $f\in F[t]$ of positive degree has infinitely many prime divisors, one can follow Schur's original proof for $\mathbb{Z}$ and use the degree function in place of the absolute value on $\mathbb{Z}$.
\end{proof}
Recall that a \emph{global field} is a field that is a finite extension of either $\mathbb{Q}$ or $\mathbb{F}_q(t)$, where $\mathbb{F}_q$ is the finite field with $q$ elements.

\begin{cor}\label{Corollary-Schur-Dedekind-1}
Let $T$ be the ring of integers of a global field. Then $T$ is a Schur-Dedekind domain.
\end{cor}
\begin{proof}
This follows from Theorem~\ref{Schur-Dedekind-1}, Theorem~\ref{Schur-Dedekind-2}, and Theorem~\ref{Schur-Dedekind-3}.
\end{proof}

\begin{ex}
Here is an example of a PID (and thus a Dedekind domain) containing infinitely many nonzero prime ideals that is not a Schur-Dedekind domain. Let $S$ be the set of prime numbers $p$ in $\mathbb{Z}$ such that either $p=2$ or $p\equiv 1 \pmod{4}$.
Let $T=\mathbb{Z}_S$, the localization of $\mathbb{Z}$ at $S$. Thus
$T=\mathbb{Z}[\{\frac{1}{p}\}]$, where $p$ runs through the prime numbers in $S$. Then $T$ is a PID because the localization of a PID is again a PID. The primes of $T$ are given by the rational primes $p$ where $p\equiv 3 \pmod{4}$.
\par 
Let $f(x)=x^2+1\in T[x]$. We now show that $f$ has no prime divisors, and thus $T$ is not a Schur-Dedekind domain. It is sufficient to show that if $\alpha\in T$, then $f(\alpha)$ is a unit in $T$.
\par 
Let $\alpha\in T$. Then $\alpha=\displaystyle \frac{a}{b}$,
with $a,b\in\mathbb{Z}$, $\gcd(a,b)=1$, and every prime divisor of $b$ (if any) lies in $S$. Since
\[
f\!\left(\frac{a}{b}\right)
=
\frac{a^2+b^2}{b^2},
\]
and $\gcd(a,b)=1$, it follows that every prime divisor of $a^2+b^2$ also lies in $S$. Thus $f(\alpha)$ is a unit in $T$, as desired.
\end{ex}

\section{Hypersurfaces, irreducibility, smoothness}\label{sec:ag-cones}

In this section, we introduce well-known tools that provide a new perspective that is useful for proving statements related to the denseness in $\Q_p$ of forms of arbitrary (large) order. We believe that most of this section is already known, except Theorem~\ref{thm:cone-over-Fp-p-large-over-Q}, but we keep the full section to improve our exposition. Since forms are defined as homogeneous polynomials, we can apply some methods and results from commutative algebra and algebraic geometry. Namely, each form gives a projective variety --- a hypersurface.  We will define our objects over an arbitrary ring \(T\), whose field of fractions is $L$, which can be chosen 
among
 \(\mathbb{Z},\mathbb{Z}_p,\F_p,\mathbb{Q},\Q_p,\mathbb{C}\). For homogeneous forms $F_1,\ldots,F_k\in T[x_0,\ldots,x_n]$ denote their zero set by
$$V_+(F_1,\ldots,F_k)=\{(x_0:\ldots:x_n)\in \P_T^n \colon F_1(x_0:\ldots:x_n)=\cdots=F_k(x_0:\ldots:x_n)=0\}.$$

We start with a simple technical, but useful lemma.

\begin{lem}\label{lem:more-variables-nontrivial-solution}
    Assume we are given \(m,n\in \mathbb{N}\) such that \(m\leq n\). Let us have homogeneous \(f_1,f_2,\ldots,f_m\in k[x_0,x_1,\ldots,x_n]\) over an algebraically closed field \(k\). Then there exists a point \(a\coloneq (a_0:\ldots:a_n)\in \P^n_k\) such that
    \[f_1(a)=f_2(a)=\cdots=f_m(a)=0.\]
\end{lem}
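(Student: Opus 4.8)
The plan is to prove this by induction on the number of equations $m$, using the fact that each individual homogeneous polynomial of positive degree over an algebraically closed field has a nontrivial zero, combined with the projective dimension theorem (intersecting a projective variety with a hypersurface drops the dimension by at most one).

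\textbf{Setup and base case.} First I would recall that $\P^n_k$ is an irreducible projective variety of dimension $n$. If some $f_i$ is identically zero, it imposes no condition and we may discard it, so we may assume each $f_i$ is a nonzero homogeneous polynomial; if $\deg f_i = 0$ for some $i$ then $f_i$ is a nonzero constant and the system has no solution — but in our intended application the $f_i$ have positive degree, so I would either add the hypothesis $\deg f_i \geq 1$ or note it is implicit. For $m=1$: a single nonzero homogeneous polynomial $f_1$ of positive degree over an algebraically closed field $k$ has a nontrivial zero in $k^{n+1}$ (this is standard; e.g. dehomogenize and use that $k$ is algebraically closed, or invoke that a hypersurface in $\P^n$ is nonempty when $n \geq 1$), giving a point $a \in \P^n_k$ with $f_1(a) = 0$. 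Since $m < n$ and $m \geq 1$, we have $n \geq 2$, so this is fine.

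\textbf{Inductive step.} Suppose the claim holds for $m-1$ homogeneous polynomials in $\P^N_k$ whenever $m - 1 < N$. Given $f_1,\ldots,f_m$ in $k[x_0,\ldots,x_n]$ with $m < n$, consider $X = V_+(f_1,\ldots,f_{m-1}) \subseteq \P^n_k$. By the projective dimension theorem, every irreducible component of $X$ has dimension at least $n - (m-1)$; in particular, since $m - 1 < m < n$, the variety $X$ is nonempty and has a component $Z$ with $\dim Z \geq n - m + 1 \geq 1$ — in fact $\dim Z \geq 2$ if we want strict room, but $\dim Z \geq 1$ suffices. Now $Z$ is a projective variety of positive dimension, and $f_m$ restricted to $Z$ is a homogeneous polynomial. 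Either $f_m$ vanishes identically on $Z$, in which case any point of $Z$ works, or $Z \cap V_+(f_m)$ is a hypersurface section of the positive-dimensional projective variety $Z$, hence nonempty (a hypersurface cannot miss a projective variety of dimension $\geq 1$). In either case there is a point $a \in Z \subseteq X$ with $f_m(a) = 0$, i.e. $f_1(a) = \cdots = f_m(a) = 0$.

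\textbf{Alternative and the main obstacle.} A cleaner phrasing avoids induction entirely: directly apply the projective dimension theorem to $V_+(f_1,\ldots,f_m)$, concluding every component has dimension $\geq n - m \geq 1 > 0$ (using $m < n$), hence $V_+(f_1,\ldots,f_m) \neq \emptyset$. This is the shortest route, provided one is comfortable quoting that intersecting with each hypersurface drops dimension by at most one and that the empty scheme would have ``dimension $-\infty$'', which forces nonemptiness once $n - m \geq 0$; to get an honest point rather than just nonemptiness of the scheme one notes a nonempty variety over an algebraically closed field has a closed point. The main technical obstacle is simply citing the right form of the dimension theorem (e.g. Hartshorne, Proposition I.7.2, or the affine-cone version: a variety in $\A^{n+1}$ cut out by $m$ equations has every component of dimension $\geq n+1-m$, so if $m \leq n$ the cone is strictly larger than the origin and yields a projective point) and making sure the degenerate cases (some $f_i$ zero or constant) are handled. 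I would go with the affine-cone formulation since it is the most elementary and self-contained: the common zero locus in $\A^{n+1}_k$ of $m \leq n$ forms is a nonempty closed cone every component of which has dimension $\geq n + 1 - m \geq 1$, so it strictly contains $\{0\}$, and any nonzero point of it gives the desired $a \in \P^n_k$.
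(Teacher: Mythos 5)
Your proposal is correct and uses essentially the same argument as the paper: both apply the projective dimension theorem (Hartshorne, Thm.~I.7.2) inductively to conclude that $V_+(f_1,\ldots,f_m)$ has every component of dimension at least $n-m\geq 1$, hence is nonempty; your ``cleaner phrasing'' paragraph is in fact precisely the paper's proof. The one small improvement you offer over the paper is flagging the degenerate case of a nonzero constant $f_i$ (homogeneous of degree $0$), which the lemma as stated silently excludes and which would make the statement false as literally written.
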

\begin{proof}
    The proof goes by the classical dimension theory. In particular, we inductively apply \cite[Thm. I.7.2]{hartshorne}. Each \(f_i\) gives one projective hypersurface \(V_+(f_i)\subseteq \mathbb{P}^n_k\). Therefore, we have
    \[\dim(V_+(f_1,f_2))=\dim V_+(f_1)\cap V_+(f_j)\geq n-1+n-1-n=n-2.\]
    By applying the theorem \(m-1\) times (to $\dim(V_+(f_1,f_2,\ldots,f_s)$, $2\leq s\leq m$), we obtain
    \[\dim \bigcap_{i=1}^m V_+(f_i)\geq n-m\geq 0.\]
    All varieties of dimension at least \(0\) over an algebraically closed field are non-empty. 
    
\end{proof}

\begin{defn}
We say that a form $F\in T[x_0,\ldots,x_n]$ (or the variety $V_+(F)=0$ defined by $F$) is \emph{non-singular} or \emph{smooth} if $V_+(\frac{\partial F}{\partial x_0},\ldots,\frac{\partial F}{\partial x_n})=\emptyset$ considered over the algebraic closure $\overline{L}$. Otherwise, it is called \emph{singular}.
\end{defn}


\begin{lem}\label{lem:cone_partial}
Let \(F\in L[x_0,\ldots,x_n]\) be a form. If $\o(F)<n+1$, there exists $(\lambda_0,\ldots,\lambda_n)\neq (0,\ldots,0)\in L^{n+1}$ such that \(\sum_{i=0}^n\lambda_i\frac{\partial F}{\partial x_i}=0\). The converse holds when $\char(L)=0$ or if $\char(L)>0$ and all for monomials $x_0^{e_0}\cdots x_n^{e_n}$ appearing in $F$, we have that $\char(L)\nmid \max(e_0,1)\cdots \max(e_n,1)$.
\end{lem}

\begin{rem}
The notation $\max(e_i,1)$ is used to disregard the variables that do not appear in the monomials, i.e., when $e_i=0$, which would make the divisibility condition useless.
\end{rem}

\begin{proof}
Assume that $\o(F)<n+1$. Then, there is a non-negative integer $s<n$, $\alpha_{ij}\in L$, for $0\leq i\leq s$ and $0\leq j\leq n$ and a form $G\in L[y_0,\ldots,y_s]$ such that  
$$F(x_0,\ldots,x_n)=G\left(\sum_{j=0}^n\alpha_{0j}x_j,\ldots,\sum_{j=0}^n\alpha_{sj}x_j\right).$$
Then, for $(\lambda_0,\ldots,\lambda_n)\in L^{n+1}$ we compute
$$\sum_{i=0}^n\lambda_i\frac{\partial F}{\partial x_i}=\sum_{j=0}^s\left(\sum_{i=0}^n\lambda_i\alpha_{ji}\right)\frac{\partial G}{\partial y_j}.$$

Now the system of equations $\sum_{i=0}^n\lambda_i\alpha_{ji}=0$ for $0\leq j\leq s$ has $s+1$ equation in $n+1$ unknown variables $(\lambda_0,\ldots,\lambda_n)$, so it has a non-trivial solution. For such a solution $(\lambda_0,\ldots,\lambda_n)$, we have that $\sum_{i=0}^n\lambda_i\frac{\partial F}{\partial x_i}=0$.

Assume there is $(\lambda_0,\ldots,\lambda_n)\neq (0,\ldots,0)\in L^{n+1}$ such that \(\sum_{i=0}^n\lambda_i\frac{\partial F}{\partial x_i}=0\). There is an invertible matrix $A=(\alpha_{ij})_{0\leq i,j\leq n}$ over $L$ such that $\alpha_{i0}=\lambda_i$ because $(\lambda_0,\ldots,\lambda_n)\neq (0,\ldots,0)$. Then, for $G(X)=F(AX)$, we have 
$$\frac{\partial G}{\partial x_0}=\sum_{i=0}^n \lambda_i\frac{\partial F}{\partial x_i}=0,$$
which under the assumptions of the lemma, implies that $G$ is a form that does not contain $x_0$, so $\o(F)<n+1$.
\end{proof}

There are three direct consequences of Lemma~\ref{lem:cone_partial}.

\begin{cor}\label{cor:order-the-same-when-raised-to-powers}
    Let $r$  be a positive integer and \(F\in L[x_0,x_1,\ldots,x_n]\), where $\char(L)=0$, be a form with $\o(F)=n+1$. Then we have  \(\o(F)=\o(F^r). \)
\end{cor}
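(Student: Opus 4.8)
The plan is to use Lemma~\ref{lem:cone_partial} to translate the statement about orders into a statement about linear combinations of partial derivatives. Suppose $\o(F^r)<n+1$; we want to conclude $\o(F)<n+1$, which together with the trivial inequality $\o(F^r)\leq\o(F)$ will finish the proof. By Lemma~\ref{lem:cone_partial} applied to $F^r$, there exists a nonzero vector $(\lambda_0,\ldots,\lambda_n)\in\Q^{n+1}$ with $\sum_{i=0}^n\lambda_i\,\partial(F^r)/\partial x_i=0$. Since $\partial(F^r)/\partial x_i=rF^{r-1}\,\partial F/\partial x_i$, this reads $rF^{r-1}\sum_{i=0}^n\lambda_i\,\partial F/\partial x_i=0$ in the polynomial ring $\Q[x_0,\ldots,x_n]$.

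Next I would invoke that $\Q[x_0,\ldots,x_n]$ is an integral domain. If $F$ is the zero form the statement is vacuous (or trivial), so assume $F\neq 0$; then $rF^{r-1}\neq 0$ (here $r$ is a positive integer, hence invertible in $\Q$, which is where working over $\Q$ rather than a ring of positive characteristic matters). Cancelling $rF^{r-1}$ from the equation $rF^{r-1}\sum_{i=0}^n\lambda_i\,\partial F/\partial x_i=0$ yields $\sum_{i=0}^n\lambda_i\,\partial F/\partial x_i=0$ with $(\lambda_0,\ldots,\lambda_n)\neq(0,\ldots,0)$. Applying Lemma~\ref{lem:cone_partial} once more, this time to $F$ itself, gives $\o(F)<n+1$, contradicting the hypothesis $\o(F)=n+1$. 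Hence $\o(F^r)=n+1$, and combined with $\o(F^r)\leq\o(F)=n+1$ we get $\o(F)=\o(F^r)$.

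There is essentially no obstacle here: the only points requiring a word of care are that one must work over a field of characteristic zero (so that $r\neq 0$ and the factor $rF^{r-1}$ can be cancelled — over $\F_p$ with $p\mid r$ this would fail), and that Lemma~\ref{lem:cone_partial} is stated for forms over a ring $R$, so one should make sure the $\lambda_i$ can be taken in $\Q$, which is exactly the content of that lemma applied with $R=\Q$. One should also note the easy direction $\o(F^r)\leq\o(F)$ explicitly: if $F(AX)$ omits some variable $x_j$, then so does $F(AX)^r=F^r(AX)$, so any linear change of variables witnessing a small order for $F$ also witnesses one for $F^r$.
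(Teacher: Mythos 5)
Your proof is correct and uses essentially the same argument as the paper: apply Lemma~\ref{lem:cone_partial} to both $F$ and $F^r$, use $\frac{\partial F^r}{\partial x_i}=rF^{r-1}\frac{\partial F}{\partial x_i}$, and cancel $rF^{r-1}$ because $\Q[x_0,\ldots,x_n]$ is an integral domain of characteristic zero. The paper runs the argument directly (nonvanishing is preserved) rather than by contradiction, but the content is identical.
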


\begin{proof}
Let $\lambda_0,\ldots,\lambda_n\in L$ such that $\sum_{i=0}^n\lambda_i\frac{\partial F^r}{\partial x_i}= 0$. Then, by the chain rule, we have $\sum_{i=0}^n\lambda_i\frac{\partial F}{\partial x_i}rF^{r-1}=0$ and since $\char(L)=0$, it implies $\sum_{i=0}^n\lambda_i\frac{\partial F}{\partial x_i}= 0$. As $\o(F)=n+1$, we conclude $\lambda_0=\cdots=\lambda_n=0$, so, by Lemma~\ref{lem:cone_partial}, it follows that $\o(F^r)=n+1=\o(F)$.
\end{proof}

\begin{cor}\label{cor:singular}
Let \(F\in L[x_0,\ldots,x_n]\) be a form. If $\o(F)<n+1$, then $F$ is singular. 
\end{cor}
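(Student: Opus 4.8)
The plan is to show that if $\o(F)<n+1$ then, after an invertible linear change of variables, $F$ becomes a form that misses one of the variables, and such a form is visibly singular at the corresponding coordinate point. We may assume $\deg F\ge 2$: a nonzero form of degree $\le 1$ is always smooth, so the corollary is to be read under the standing assumption that forms have degree at least $2$.

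Since $\o(F)<n+1$, by the definition of $\o(F)$---equivalently, by the reduction carried out in the proof of Lemma~\ref{lem:cone_partial}---the form $F$ is equivalent over $\Frac(R)$ to a form involving fewer than $n+1$ variables; after permuting the coordinates we may write $F(\overline{x})=G(T\overline{x})$ for some invertible $T$ over $\Frac(R)$ and some form $G$ that does not involve $x_0$. In particular $\deg G=\deg F\ge 2$ and $\partial G/\partial x_0$ is the zero polynomial.

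Next I would exhibit a singular point of $G$. Take $P=(1:0:\cdots:0)\in\P^n_{\overline{\Frac(R)}}$. Then $\partial G/\partial x_0$ vanishes at $P$ trivially, and for $1\le j\le n$ the polynomial $\partial G/\partial x_j$ lies in $\overline{\Frac(R)}[x_1,\ldots,x_n]$ and is homogeneous of degree $\deg G-1\ge 1$, so it vanishes when $x_1=\cdots=x_n=0$, i.e.\ at $P$. Hence $P\in V_+\!\big(\partial G/\partial x_0,\ldots,\partial G/\partial x_n\big)$, so $G$ is singular. Finally, writing $G(X)=F(T^{-1}X)$ with $T^{-1}$ invertible, the chain rule gives $\nabla_X G(X)=(T^{-1})^{t}\,(\nabla F)(T^{-1}X)$, so $(\nabla F)(T^{-1}P)=0$ with $T^{-1}P\neq 0$; therefore $F$ is singular.

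The argument is short and I do not foresee a genuine obstacle, but one point deserves a remark. It is tempting to finish instead by using Lemma~\ref{lem:cone_partial} to discard $\partial F/\partial x_0$ via a relation $\sum_i\lambda_i\,\partial F/\partial x_i=0$ with $\lambda_0\neq 0$, and then quoting Lemma~\ref{lem:more-variables-nontrivial-solution} to find a common zero of the remaining $n$ derivatives $\partial F/\partial x_1,\ldots,\partial F/\partial x_n$ in $\P^n$. This just fails at the boundary, since that lemma needs \emph{strictly fewer} than $n$ forms in $\P^n$; one can still push it through with one further application of the projective dimension theorem used there, but passing to the coordinate point above is cleaner. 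It is also exactly there that $\deg F\ge 2$ is needed: for a linear $G$ the derivatives $\partial G/\partial x_j$ are nonzero constants, vanishing nowhere, and indeed the corollary is false for forms of degree $1$.
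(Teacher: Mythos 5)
Your proof is correct, and it takes a genuinely different route from the paper. The paper's proof first invokes Lemma~\ref{lem:cone_partial} to obtain a linear relation $\sum_i\lambda_i\,\partial F/\partial x_i=0$ with $\lambda_0\neq 0$, so that $V_+(\partial F/\partial x_0,\ldots,\partial F/\partial x_n)=V_+(\partial F/\partial x_1,\ldots,\partial F/\partial x_n)$, and then appeals to Lemma~\ref{lem:more-variables-nontrivial-solution} to show this locus is nonempty. You instead pass to the equivalent form $G$ that omits a variable and simply write down the singular point: the coordinate point $P=(1:0:\cdots:0)$ lies in the common zero locus because $\partial G/\partial x_0$ is identically zero and each remaining $\partial G/\partial x_j$ is a homogeneous polynomial of degree $\deg G-1\geq 1$ in $x_1,\ldots,x_n$ alone, hence vanishes at $P$; the invertible change of coordinates then transports this to a singular point of $F$. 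Your argument is more elementary, requiring no projective dimension theory. You are also right to flag that the paper's citation of Lemma~\ref{lem:more-variables-nontrivial-solution} is formally at the boundary: that lemma is stated for $m<n$ forms in $\P^n$, whereas the corollary's proof uses it with $m=n$ (the $n$ derivatives $\partial F/\partial x_1,\ldots,\partial F/\partial x_n$ in $\P^n$); the underlying dimension count of course still yields $\dim\geq 0$, but as written the lemma does not cover this case, and your proof sidesteps the issue entirely. Your observation that the statement needs $\deg F\geq 2$ is also correct — for a linear form the partial derivatives are nonvanishing constants, so the corollary fails — and is an implicit standing assumption the paper does not make explicit.
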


\begin{proof}
By Lemma~\ref{lem:cone_partial}, there is $(\lambda_0,\ldots,\lambda_n)\neq (0,\ldots,0)\in L^{n+1}$ (we may assume $\lambda_0\neq 0$) such that \(\sum_{i=0}^n\lambda_i\frac{\partial F}{\partial x_i}=0\). We apply Lemma~\ref{lem:more-variables-nontrivial-solution} to conclude $V_+(\frac{\partial F}{\partial x_1},\ldots,\frac{\partial F}{\partial x_n})\neq \emptyset$ because we have $n$ equations in $n+1$ variables. Since $\frac{\partial F}{\partial x_0}=-\frac{1}{\lambda_0}\sum_{i=1}^n\lambda_i\frac{\partial F}{\partial x_i}$, we have that $V_+(\frac{\partial F}{\partial x_0},\ldots,\frac{\partial F}{\partial x_n})=V_+(\frac{\partial F}{\partial x_1},\ldots,\frac{\partial F}{\partial x_n})\neq \emptyset$.
\end{proof}

Let $T$ be a Dedekind domain with infinitely many prime ideals, with the fraction field $L$. For a $F$ be a form over $L$, by clearing the denominators, we may assume that $F$ has coefficients in $T$. The following result is essential for the investigation of our question.  We shall show that form \(F\) is of maximal order if and only if its reduction to \(k_\p=T/\p\), denoted $F_\p$, is of maximal order for all, except for finitely many, prime ideals \(\p\subseteq T\). Firstly, we need two preparatory statements.

\begin{cor}\label{cor:cone_modp}
Let \(F\in T[x_0,\ldots,x_n]\) be a form. If $\o(F)<n+1$, for all except finitely many prime ideals $\p\subseteq T$, $\o(F_\p)<n+1$. 
\end{cor}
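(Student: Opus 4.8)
To establish the corollary, the plan is to reduce everything to the characterisation of order given by Lemma~\ref{lem:cone_partial} and then to check that the relevant algebraic identity survives reduction modulo $p$. First I would apply Lemma~\ref{lem:cone_partial} with $R=\Q$: since $\o(F)<n+1$, there is a nonzero tuple $(\lambda_0,\ldots,\lambda_n)\in\Q^{n+1}$ with $\sum_{i=0}^n\lambda_i\frac{\partial F}{\partial x_i}=0$ in $\Q[x_0,\ldots,x_n]$. Clearing denominators and dividing by the $\gcd$ of the $\lambda_i$, I may assume that $(\lambda_0,\ldots,\lambda_n)\in\Z^{n+1}$ is primitive and still satisfies $\sum_{i=0}^n\lambda_i\frac{\partial F}{\partial x_i}=0$, now as an identity in $\Z[x_0,\ldots,x_n]$.

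Next I would reduce this identity modulo $p$. Since reduction modulo $p$ is a ring homomorphism that commutes with formal partial differentiation, one has $\partial F_p/\partial x_i=\overline{\partial F/\partial x_i}$, and hence $\sum_{i=0}^n\overline{\lambda_i}\,\frac{\partial F_p}{\partial x_i}=0$ in $\F_p[x_0,\ldots,x_n]$. Because $(\lambda_0,\ldots,\lambda_n)$ is primitive, the reduced tuple $(\overline{\lambda_0},\ldots,\overline{\lambda_n})\in\F_p^{n+1}$ is nonzero for every prime $p$. Applying Lemma~\ref{lem:cone_partial} now with $R=\F_p$ then yields $\o(F_p)<n+1$. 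The only reason the statement is phrased with ``all except finitely many'' rather than ``all'' is that one should discard the finitely many primes dividing the content of $F$, for which $F_p$ fails to be a form of the same degree (indeed $F_p=0$); equivalently, after normalising $F$ to be primitive the conclusion holds for every prime.

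The argument is essentially bookkeeping, so I do not expect a serious obstacle; the one point that needs care is to invoke Lemma~\ref{lem:cone_partial} over the three different base rings --- over $\Q$ to extract the relation among the partials, over $\Z$ to make it integral and primitive, and over $\F_p$ to draw the conclusion --- and to keep in mind that the notion of $\o(F_p)$ meant here is order over $\F_p$ (equivalently over $\overline{\F_p}$), which is exactly what the corollary asserts. One may also note the harmless degenerate case in which $F_p$ is inseparable, i.e.\ $\partial F_p/\partial x_i=0$ for all $i$ (which forces $p\mid\deg F$, so occurs for only finitely many $p$): there the displayed relation holds trivially for any nonzero tuple, so Lemma~\ref{lem:cone_partial} still gives $\o(F_p)<n+1$.
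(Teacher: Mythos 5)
Your proposal follows essentially the same route as the paper: extract a linear relation among the partial derivatives via Lemma~\ref{lem:cone_partial} over $\Q$, scale it to a primitive integral tuple, reduce modulo $p$ (using that differentiation commutes with reduction), and invoke Lemma~\ref{lem:cone_partial} again over $\F_p$. For $p$ larger than $\deg F$ this is correct and gives the corollary. However, your closing paragraph contains a genuine error: the backward implication in Lemma~\ref{lem:cone_partial} is \emph{not} valid over $\F_p$ once $p\le\deg F$. The proof of that lemma passes from $\partial G/\partial x_0=0$ to ``$G$ does not contain $x_0$'', and this step fails in characteristic $p$ for forms of degree $\ge p$; for instance $G=x_0^{p}+x_1x_2^{p-1}$ over $\F_p$ satisfies $\partial G/\partial x_0=0$, so the nonzero tuple $(1,0,0)$ kills the partials, but $G$ is not a product of linear forms and hence has order $3$, not $<3$. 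So the ``inseparable'' case you call harmless is precisely where Lemma~\ref{lem:cone_partial} can break, and your claim that it ``still gives $\o(F_p)<n+1$'' there is unjustified — as is the stronger claim that after normalising $F$ to be primitive the conclusion holds for every prime.

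For the corollary as stated this is not fatal: you only need ``all but finitely many'' primes, so you may simply add $p\le\deg F$ to the excluded set. If you do want the sharper ``every prime with $F_p\neq 0$'' statement (which is in fact true), the right move is to complete the primitive vector $\lambda$ to a unimodular matrix $A\in GL_{n+1}(\Z)$, note that $F(AX)$ is genuinely free of $x_0$ already over $\Z$ (a characteristic-zero fact), and then reduce that identity modulo $p$; this is essentially what the paper's terse proof does via the matrix $A$ and the condition $p\nmid\det A$.
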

\begin{proof}
Immediate consequence of Lemma~\ref{lem:cone_partial}, since derivation commutes with the base change modulo \(\p\). We avoid finitely many $\p$ for which $\nu_\p(\det A)>0$, where $A$ is the invertible matrix used to establish an isomorphism of $F$ with a form of smaller number of variables, and for which $\nu_p(\max(e_0,1)\cdots \max(e_n,1))>0$, for all $x_0^{e_0}\cdots x_n^{e_n}$ appearing as monomials in $F$.    
\end{proof}

We conclude this section with the theorem saying that a form of a maximal order over $T$ keeps the maximal order over almost all $k_\p$.

\begin{thm}\label{thm:cone-over-Fp-p-large-over-Q}
Assume $\char(L)=0$. Let \(F\in T[x_0,\ldots,x_n]\). Then \(\o(F)=n+1\) if and only if \(\o(F_\p)=n+1\) for all except finitely many prime ideals \(\p\subseteq T\).
\end{thm}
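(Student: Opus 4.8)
The plan is to deduce Theorem~\ref{thm:cone-over-Fp-p-large-over-Q} by combining Corollary~\ref{cor:cone_modp} with a converse implication, namely that if $\o(F_p) < n+1$ for infinitely many primes $p$, then $\o(F) < n+1$ over $\Q$. One direction is already done: if $\o(F) = n+1$, then Corollary~\ref{cor:cone_modp} (applied in contrapositive form to the statement ``$\o(F) < n+1 \Rightarrow \o(F_p) < n+1$ for almost all $p$'') does \emph{not} immediately give what we want, so instead I would argue directly. Actually the clean way: if $\o(F) = n+1$ then by Lemma~\ref{lem:cone_partial} the linear system in $(\lambda_0,\ldots,\lambda_n)$ given by the coefficients of $\sum_i \lambda_i \frac{\partial F}{\partial x_i} = 0$ has only the trivial solution over $\Q$, i.e.\ the coefficient matrix $N$ (with rational, hence after scaling integer, entries) has rank $n+1$; equivalently some $(n+1)\times(n+1)$ minor of $N$ is a nonzero integer. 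For all primes $p$ not dividing that minor, the reduced system $N_p \lambda = 0$ also has only the trivial solution over $\F_p$, and since $N_p$ is exactly the coefficient matrix attached to $F_p$ (derivation commutes with reduction mod $p$), Lemma~\ref{lem:cone_partial} gives $\o(F_p) = n+1$ for all but finitely many $p$.

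For the converse, suppose $\o(F) < n+1$. Then by Corollary~\ref{cor:cone_modp} we have $\o(F_p) < n+1$ for all but finitely many $p$, so it is certainly \emph{not} the case that $\o(F_p) = n+1$ for all but finitely many $p$. This is exactly the contrapositive of the remaining implication, so the two statements together establish the biconditional. In other words, the whole theorem is just the conjunction of Corollary~\ref{cor:cone_modp} and the rank-of-the-derivative-matrix argument above, so there is essentially nothing new to prove beyond bookkeeping.

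The only mild subtlety, and the step I would be most careful about, is making precise that ``the coefficient matrix attached to $F_p$ is the reduction mod $p$ of the coefficient matrix attached to $F$.'' Concretely: write $\frac{\partial F}{\partial x_i} = \sum_\alpha c_{i\alpha} x^\alpha$ where $\alpha$ ranges over monomials of degree $\deg F - 1$; then $\sum_i \lambda_i \frac{\partial F}{\partial x_i} = 0$ is the linear system $\sum_i c_{i\alpha}\lambda_i = 0$ for all $\alpha$, with matrix $N = (c_{i\alpha})$. Reduction mod $p$ sends $\partial_{x_i} F \mapsto \partial_{x_i} F_p$ and hence $N \mapsto N_p = (c_{i\alpha} \bmod p)$, so the reduced system is indeed the one attached to $F_p$. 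Having observed this, pick a nonzero maximal minor $\mu$ of $N$ witnessing $\operatorname{rank} N = n+1$ (it exists precisely because $\o(F) = n+1$), discard the primes dividing $\mu$, and for the rest $\operatorname{rank} N_p = n+1$ as well, giving $\o(F_p) = n+1$. Combining with the converse paragraph finishes the proof; I would expect the total length to be under half a page.

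One should also double-check the edge case where $F$ is a constant times a power of a linear form or $F_p$ vanishes identically — but the hypothesis that $F$ is a form (together with primitivity assumptions available throughout, or simply noting that we may clear content) means $F_p \neq 0$ for almost all $p$, so $\frac{\partial F_p}{\partial x_i}$ makes sense and the argument goes through. I do not anticipate this being a real obstacle; it is a one-line remark.
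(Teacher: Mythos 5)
Your proposal is correct and uses essentially the same approach as the paper: both reduce the statement, via Lemma~\ref{lem:cone_partial} and Corollary~\ref{cor:cone_modp}, to the observation that the coefficient matrix $N=(c_{i,\Gamma})$ of the partial derivatives $\frac{\partial F}{\partial x_i}$ has the same rank over $\Q$ as its reduction $N_p$ has over $\F_p$ for all but finitely many $p$. The only cosmetic difference is the direction of argument for the nontrivial implication: you argue directly by exhibiting a nonzero $(n+1)\times(n+1)$ integer minor $\mu$ of $N$ and discarding $p\mid\mu$, whereas the paper argues by contrapositive, showing that if infinitely many $p$ make the rows of $N_p$ linearly dependent then every $(n+1)\times(n+1)$ minor of $N$ vanishes.
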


\begin{proof}

If $\o(F)<n+1$, then by Corollary~\ref{cor:cone_modp}, $\o(F_\p)<n+1$ for almost all $\p$, proving the implication ``$\o(F_\p)=n+1$ for almost all prime ideals $\p$ $\implies$ $\o(F)=n+1$''.

For the other direction, we assume that \(o(F)=n+
1\). Then, since $\char(L)=0$, we have that $\frac{\partial F}{\partial x_i}$ for $0\leq i\leq n$ are linearly independent over $L$. Let $\ell$ be the number of all monomials in $x_0,\ldots,x_n$ of degree $\deg(F)-1$, i.e., all monomials that could possibly appear in  $\frac{\partial F}{\partial x_i}$. For each $0\leq i\leq n$, let $v_i$ be the vectors whose coordinates are coefficients of the monomials appearing in $\frac{\partial F}{\partial x_i}$, which we identify with an $\ell$-dimensional $L$-vector space. The linear independence of $\frac{\partial F}{\partial x_i}$ over $L$ is equivalent to saying that the vectors $v_i$ are linearly independent over $L$ for $0\leq i\leq n$. Hence, there is an $(n+1)\times (n+1)$ submatrix $B$ of coordinates of $v_i$s with $\det(B)\neq 0$. Then, for only finitely many prime ideals $\p\subseteq T$, we have $\nu_p(\det(B))>0$. Hence, for all but finitely many $\p\subseteq T$, $\nu_p(\det(B))=0$, so $B$ is invertible in $k_\p=T/\p$, so $v_i$ for $0\leq i\leq n$ stay independent over $k_\p$. Hence, $\frac{\partial F}{\partial x_i}$ for $0\leq i\leq n$  are linearly independent over $k_\p$, so by Lemma~\ref{lem:cone_partial}, we conclude that $o(F_\p)=n+1$, and this happens for all but finitely many prime ideals $\p\subseteq T$.

\end{proof}

\section{Denseness (or not) of forms for infinitely many primes}

We now prove the main results of this paper. Recall that in this section, we consider the question of whether $R(F)$ is dense in infinitely many completions with respect to prime ideals $\p$ and its variations. 

\subsection{Denseness for infinitely many prime ideals $\p$}\label{subsec:dense-infinitely-p}

We start with a proof of Theorem \ref{thm-7}, which gives a criterion when $R(F)$ is dense in $L_\p$, a completion of the fraction field $L$ of a Schur-Dedekind domain $T$ with respect to infinitely many prime ideals $\p\subseteq T$.

\begin{proof}[Proof of Theorem \ref{thm-7}]
Suppose that the polynomial defined as $$f(x):=F(x,a_1,a_2,\dots,a_{n-1})$$ has a non-zero discriminant $D(f)$. Since $T$ is a Dedekind-Schur domain, there are infinitely many primes $\p \subseteq T$ such that $D(f)\notin \p$ and so that there is $n_\p\in T$ such that $f(n_\p)\in \p$.  Since $D(f)\not\equiv0\pmod{\p}$, $f$ has distinct roots modulo $\p$. Hence, $f'(n_\p)\not\equiv 0\pmod{\p}$. By Lemma~\ref{lem:form-simple-zero-Fp}, $R(f)$ is dense in $L_\p$, so $R(F)$ is also dense in $L_\p$. 
\end{proof}

\subsection{Denseness for all sufficiently large primes}\label{subsec:dense-all-large-p}

Now, we prove Theorem~\ref{thm-any-degree-dense-from-some-p}, which roughly states that, given that the form $F$ has enough variables, $R(F)$ is dense in $L_\p$, the completion of $L$ at all primes $\p$ of $T$ of sufficiently large norm. We need a preparatory lemma.

\begin{lem}\label{lem:spread-out}
    Assume that \(T\) is a Dedekind domain and \(L=\Frac{(T)}\). Assume that we have a non-singular homogeneous form \(F\) with coefficients in \(T\).  Then, for all except maybe finitely many \(\mathfrak{p}\in \Spec T\) we have that \(F_\mathfrak{p}\) is non-singular form over \(T/\mathfrak{p}\).
\end{lem}

\begin{proof}
The proof of this lemma is a direct application of  \cite[Theorem 3.2.1]{Poonen-book}. Since \(T\) is a Dedekind domain, \(\Spec T\) is a Noetherian integral scheme.  We are assuming smoothness of \(F\) over \(L\), which means \(X_L \coloneq X\times_T L\) is a smooth scheme over a field \(L\). Since we have a projective variety, morphism \(X\to \Spec T\) is by definition a morphism of finite presentation.  Therefore, we can apply statement \((ii)\) of the cited theorem to conclude there exists a dense open \(U\subseteq \Spec T\) such that morphism \(X_U\to U\) is smooth. Since smoothness is preserved under base change, \(X_\mathfrak{p}\coloneq X\times_T T/\mathfrak{p}\) is a smooth scheme over field \(T/\mathfrak{p}\) for all \(\mathfrak{p} \in U\).
 All closed sets in a Zariski topology are given by \(V(I)\) for some ideal \(I\subseteq T\). Denseness implies \(I\neq (0)\). Since \(T\) is a Dedekind domain, we have decomposition \(I=\mathfrak{p}_1^{e_1}\mathfrak{p}_2^{e_2}\ldots \mathfrak{p}_r^{e_r}\). Hence, \(U=V(\mathfrak{p}_1)^c \cap V(\mathfrak{p_2})^c \cap \ldots\cap V(\mathfrak{p}_r)^c\). In other words, for every other prime ideal \(\mathfrak{p}\in T\backslash\{\mathfrak{p}_1,\mathfrak{p}_2,\ldots, \mathfrak{p}_r\}\) we have that \(X_\mathfrak{p}=V(F_\mathfrak{p})\) is a smooth scheme over field \(T/\mathfrak{p}\).

\end{proof}

\begin{proof}[Proof of Theorem~\ref{thm-any-degree-dense-from-some-p}]

We may assume that $F$ does not have a (simple, implied by non-singularity) linear factor because then we can conclude by Lemma~\ref{lem:simple-linear-factor}.

By Theorem~\ref{thm:cone-over-Fp-p-large-over-Q}, there is $C>0$ such that for all $N(\p)>C$, we have $\o(F)=\o(F_\p)\geq 3$.

By Lemma~\ref{lem:spread-out}, there is $C'>0$ such that for all $N(\p)>C'$, we have that $F$ reduced modulo $\p$, $F_\p$, is smooth over $k_\p$.  Now we use the smoothness of $F_\p$ over $k_\p$ for $N(\p)>\max(C,C')$ to prove that it is absolutely irreducible.

\begin{lem}\label{lem:not-absirred-not-smooth}
Assume that $F\in L[x_0,\ldots,x_{m-1}]$, where $L$ is a field, is a form of degree $d>1$ such that $\o(F)\geq 3$ is reducible over the algebraic closure of $L$. Then, $F$ is singular.  
\end{lem}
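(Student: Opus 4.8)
\emph{Proof proposal.} The plan is to exploit a nontrivial factorization $F=GH$ geometrically: any point lying on both factor hypersurfaces $V_+(G)$ and $V_+(H)$ is automatically a singular point of $F$, because the product rule $\frac{\partial F}{\partial x_i}=G\frac{\partial H}{\partial x_i}+H\frac{\partial G}{\partial x_i}$ makes all partials of $F$ vanish there. So the whole argument reduces to showing that $V_+(G)\cap V_+(H)\neq\emptyset$. First I would reduce to the case that $R=k$ is algebraically closed: by definition, $F$ is singular exactly when $V_+(\frac{\partial F}{\partial x_0},\ldots,\frac{\partial F}{\partial x_{m-1}})\neq\emptyset$ over $\overline{\Frac(R)}$, so we may replace $R$ by its algebraic closure; reducibility of $F$ persists, giving non-constant homogeneous $G,H\in k[x_0,\ldots,x_{m-1}]$ with $F=GH$ (and then automatically $d=\deg F\geq 2$). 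The point of the hypothesis $\o(F)\geq 3$ enters here only through the trivial bound $\o(F)\leq m$, which forces $m\geq 3$.

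Next I would run the dimension count. In $\P^{m-1}_k$ with $m-1\geq 2$, the two hypersurfaces $V_+(G)$ and $V_+(H)$ each have dimension $m-2$, so by the projective dimension theorem (\cite[Thm.~I.7.2]{hartshorne}, applied to each irreducible component exactly in the spirit of Lemma~\ref{lem:more-variables-nontrivial-solution}) every component of $V_+(G)\cap V_+(H)$ has dimension at least $(m-2)+(m-2)-(m-1)=m-3\geq 0$; in particular the intersection is non-empty. Fixing $P\in V_+(G)\cap V_+(H)$, we have $G(P)=H(P)=0$, hence $\frac{\partial F}{\partial x_i}(P)=G(P)\frac{\partial H}{\partial x_i}(P)+H(P)\frac{\partial G}{\partial x_i}(P)=0$ for every $i$, so $P\in V_+(\frac{\partial F}{\partial x_0},\ldots,\frac{\partial F}{\partial x_{m-1}})$ and $F$ is singular.

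The only real subtlety --- and the reason the hypothesis $\o(F)\geq 3$ is not cosmetic --- is that the argument breaks for $m=2$: the intersection of two hypersurfaces in $\P^1$ can be empty, and indeed $F=x_0x_1$ is reducible yet nonsingular since $\frac{\partial F}{\partial x_0}=x_1$ and $\frac{\partial F}{\partial x_1}=x_0$ share no common zero in $\P^1$. I would note explicitly that $\o(F)\geq 3$ guarantees $\dim\P^{m-1}\geq 2$, which is precisely what the dimension count needs. I would also remark that it is irrelevant whether ``reducible'' is read over $R$ or over $\overline{\Frac(R)}$, since singularity is tested over the algebraic closure in any case, so the statement applies equally to forms that merely fail to be absolutely irreducible, which is how it is used in the proof of Theorem~\ref{thm-any-degree-dense-from-some-p}.
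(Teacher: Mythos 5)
Your proof is correct and follows essentially the same route as the paper: factor $F=GH$, use the projective dimension theorem in $\P^{m-1}$ (with $m\geq 3$ forced by $\o(F)\leq m$) to produce a point on $V_+(G)\cap V_+(H)$, and observe by the product rule that any such point is singular for $F$. You are in fact slightly more careful than the paper, which invokes Lemma~\ref{lem:more-variables-nontrivial-solution} whose stated hypothesis $m<n$ would only literally cover $m\geq 4$, whereas your direct dimension count $m-3\geq 0$ matches the actual hypothesis $m\geq 3$; your explicit reduction to the algebraic closure and the $m=2$ counterexample $x_0x_1$ are also welcome clarifications that the paper leaves implicit.
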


\begin{proof}
Write $F=GH$. Then, by Lemma~\ref{lem:more-variables-nontrivial-solution}, that $V\coloneq V_+(G,H)\neq \emptyset$ because we assume that $\o(F)\geq 3$. Now, $F$ is singular in any $P\in V$ because $\frac{\partial F}{\partial  x_i}(P)=\frac{\partial G}{\partial  x_i}(P)H(P)+\frac{\partial H}{\partial  x_i}(P)G(P)=0$.
\end{proof}

Now we continue the proof.  Since $F_\p$ is absolutely irreducible, we apply Theorem~\ref{thm-Lewis-Schuur} to $F_\p$, which implies that there is $A=A(d,0)$ for which we know that if $N(\p)>M\coloneq \max(C,C',A)$, $F$ has a non-singular zero in $k_\p$. Now, we conclude by Lemma~\ref{lem:form-simple-zero-Fp}.
\end{proof}

\begin{rem}\label{rmk:effectiveness}
When the integral form $F$ is given, then we can get a concrete value of $M$ in Theorem~\ref{thm-any-degree-dense-from-some-p}. Namely, we can determine the bound $C'$ such that $p>C'$ implies that $F_p$ is non-singular over $\F_p$, and we use the effective theorem of Zahid, see Remark~\ref{rmk:explicit-bounds-p} for the other part of the bound for $M$. As we noted, since for $p>C'$ we have that $F_p$ is non-singular over $\F_p$, so automatically it has order $\o(F_p)=\o(F)$, so we do not need Theorem~\ref{thm:cone-over-Fp-p-large-over-Q}.  
\end{rem}

In the same way, we give a proof of Theorem \ref{thm-5}.
\begin{proof}[Proof of Theorem \ref{thm-5}]
	By Theorem \ref{cor-8}, $F$ has a non-singular zero in $\F_p$, and then we conclude as in the end of the proof of Theorem~\ref{thm-any-degree-dense-from-some-p}.
\end{proof}


We now recall that the condition of smoothness is necessary, as otherwise the statement of Theorem~\ref{thm-any-degree-dense-from-some-p} is not true. We start with first counterexamples.

\begin{proof}[Proof of Theorem~\ref{thm-order-equals-degree-first-counterexamples}]
We first prove that $\o(F)=q-1$. Consider any of $\sigma(x_0+\zeta_q x_1+\cdots + \zeta_q^{q-2}x_{q-2})$ for $\sigma\in\Gal(\Q(\zeta_q)/\Q)$. Since $1,\zeta_q,\ldots,\zeta_q^{q-2}$ are linearly independent over $\Q$, any invertible linear change of variables over $\Q$ $(x_0,\ldots,x_{q-2})=A(y_0,\ldots,y_{q-2})$ will keep all $q-1$ distinct variables $y_0.\ldots,y_{q-2}$. Then, their product will contain monomials $y_k^{q-1}$ with a non-zero coefficient for all $0\leq k\leq q-2$, hence $\o(F)=q-1$.   

Now consider any $p$ such that $p$ is a primitive root modulo $q$. There are infinitely many of such primes by Dirichlet's theorem on primes in arithmetic progressions \cite[VII(5.14)]{Neukirch} (fix one residue $a$ modulo $q$ which is a primitive root, then $a+dq$ has infinitely many primes as $\gcd(a,d)=1$). Then, it is well known that $[\F_p(\zeta_q):\F_p]=q-1$, since $p$ is then inert in $\Q(\zeta_q)$ (which can be deduced easily from, e.g., \cite[I(10.3)]{Neukirch} and factorisation of primes in number fields), i.e., $1,\zeta_q,\ldots,\zeta_q^{q-2}$ are $\F_p$-linearly independent, so $F(x_0,\ldots,x_{q-2})=0$ in $\F_p$ implies that $x_0=\cdots=x_{q-2}=0$. So, if $p\mid F(x_0,\ldots,x_{q-2})$, then $p\mid x_0,\ldots,p\mid x_{q-2}$. This implies that $q-1\mid \nu_p(F(x_0,\ldots,x_{q-2}))$ for any $(x_0,\ldots,x_{q-2})\in \Z_p^{q-1}$. Therefore, $R(F)$ is not dense in $\Q_p$ by Lemma~\ref{lem:dense-has-all-valuations}.
\end{proof}

Now we prove Theorem~\ref{thm-composite-degree-counterexamples} about more subtle counterexamples.

\begin{proof}[Proof of Theorem~\ref{thm-composite-degree-counterexamples}]
We prove that $\o(F)=m+1$ using Lemma~\ref{lem:cone_partial}. Namely, the partial derivatives of $F$ are
$$\dfrac{\partial F}{\partial x_0}=kqx_0^{kq-1},\;\text{and for $1\leq i\leq n$,}\;\dfrac{\partial F}{\partial x_i}=2kqx_i^{k-1}(x_1^k+\ldots+x_m^k)^q.$$
It is clear that $\frac{\partial F}{\partial x_0}$ is linearly independent from the others, and it is straightforward to see that $\frac{\partial F}{\partial x_i}$ for $1\leq i\leq n$ are linearly independent too.

Now, we consider only primes $p\nmid kq$. There are infinitely many primes $p$ such that $-2$ is not a $q$th power modulo $p$. This follows from Chebotarev density theorem as this condition is equivalent to $p$ being split completely in the field $L\coloneq \Q(\sqrt[q]{-2},\zeta_q)$, where $\zeta_q$ is a $q$th root of unity. We know that $L$ is Galois over $\Q$ of degree $q(q-1)$. Primes $p$ for which $q\nmid p-1$ cannot be split completely, so we only focus on primes $p$ so that $q\mid p-1$. They have density $\frac{1}{q-1}$. However, Chebotarev density theorem gives that the density of primes splitting completely in $\Q(\sqrt[q]{-2},\zeta_q)$ is $\frac{1}{q(q-1)}<\frac{1}{q-1}$, see, for example, \cite[VII (13.6)]{Neukirch}, so there are infinitely many such $p$ which do not split in $\Q(\sqrt[q]{-2},\zeta_q)$.

For all such $p$, $-2$ is not a $q$th power in $\F_p^*$, so $p\mid F(x_1,\ldots,x_m)$ implies that $p\mid x_1$ and $p\mid x_2^k+\cdots+x_m^k$. This implies that $q\mid \nu_p(F(x_1,\ldots,x_m))$, so $R(F)$ cannot be dense in $\Q_p$ by Lemma~\ref{lem:dense-has-all-valuations}.
\end{proof}

\begin{rem}
As we see in the proof, it is crucial that $k>1$ because this implies $o(F)=m+1$. If $k=1$, then the partial derivatives are not independent, which is also obvious since the order of the form $x_1+\cdots+x_m$ is 1.
\end{rem}

\subsection{Our conjecture}\label{subsec:our-conjecture}

We immediately see that the forms from Theorem~\ref{thm-composite-degree-counterexamples} do not work for prime degrees because that would bound the order of such forms. So, we mention the cases in which Conjecture~\ref{conj-prime-degree-all-cases} is true.

\begin{prop}\label{prop:F-irreducible}
If $F$ from Conjecture~\ref{conj-prime-degree-all-cases} is absolutely irreducible, then Conjecture~\ref{conj-prime-degree-all-cases} is true in this case.
\end{prop}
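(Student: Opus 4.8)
The plan is to imitate the proof of Theorem~\ref{thm-any-degree-dense-from-some-p} as closely as possible, replacing the role played there by non-singularity (which forced absolute irreducibility of $F_p$ for large $p$ via Lemma~\ref{lem:not-absirred-not-smooth}) with a direct argument that irreducibility of $F$ over $\Q$ persists modulo $p$ for almost all $p$. First I would fix a prime degree $q$ and take $m$ to be any integer with $m\geq 4$ (so that, once we know $\o(F)=m$, we automatically have $\o(F)\geq 4>3$; in fact $m>q$ is what matters, but the construction of counterexamples shows one must have $\o(F)>q$ anyway, and $m=q+1$ suffices for the statement). If $F$ has a linear factor over $\Q_p$ we are done by Lemma~\ref{lem:simple-linear-factor}, so assume it does not; since $F$ is irreducible over $\Q$ of degree $q>1$ this is automatic unless $q=1$.

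The key step is: \emph{if $F\in\Z[x_0,\ldots,x_{m-1}]$ is irreducible over $\Q$ (equivalently, absolutely irreducible after possibly enlarging to $\overline{\Q}$ — here I must be a little careful, see below), then $F_p$ is absolutely irreducible over $\F_p$ for all but finitely many $p$.} This is a standard Bertini/Noether-type spreading-out statement: absolute irreducibility is an open condition in families, so there is a nonempty open subscheme $U\subseteq\operatorname{Spec}\Z$ over which the fibers of the hypersurface $V_+(F)$ stay absolutely irreducible; concretely one can cite a Noether-irreducibility theorem with explicit bounds (e.g.\ Ruppert's or Kaltofen's bound on the primes of bad reduction in terms of $\deg F$ and the height of the coefficients of $F$), giving an explicit $C'=C'(F)$. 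The subtlety: irreducibility over $\Q$ does not imply absolute irreducibility — $F$ could factor over a number field. Here is where the hypothesis of Conjecture~\ref{conj-prime-degree-all-cases} that $\deg F=q$ is \emph{prime} does its work: if $F$ factored over $\overline{\Q}$ as a product of $\geq 2$ conjugate absolutely irreducible factors, the Galois group would permute them transitively, so their number would divide $\deg F=q$, forcing exactly $q$ linear factors, i.e.\ $F$ would be (up to scalar) a norm form $\prod_{\sigma}\sigma(\sum a_i x_i)$ of a degree-$q$ extension. That is precisely the degenerate case excluded in the discussion after Conjecture~\ref{conj-prime-degree-all-cases} (the $\Z/q\Z$-Galois counterexample), and — crucially — such a norm form has $\o(F)=q<m$, contradicting the hypothesis $\o(F)=m$. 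Hence an irreducible $F$ of prime degree $q$ with $\o(F)=m>q$ is automatically \emph{absolutely} irreducible, and the spreading-out statement applies.

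Once $F_p$ is absolutely irreducible for $p>C'$, I would proceed exactly as in Theorem~\ref{thm-any-degree-dense-from-some-p}: by Theorem~\ref{thm:cone-over-Fp-p-large-over-Q} (or, with the explicit bound from smoothness unavailable here, directly from Corollary~\ref{cor:cone_modp} applied to $F$, noting $\o(F)=m$) we have $\o(F_p)=m\geq 3$ for all large $p$; then apply Theorem~\ref{thm-Lewis-Schuur} with $H=1$ to the absolutely irreducible form $F_p$ to get, for $p>M\coloneq\max(C',A(q,0))$, a non-singular $\F_p$-point of $F_p$; specialize all but one coordinate to the corresponding values to obtain a one-variable polynomial $f(x)$ with $f\equiv 0$, $f'\not\equiv 0$ modulo $p$ at that point; lift by Hensel's lemma to a simple root in $\Z_p$; and conclude by Theorem~\ref{thm6} that $R(f(\N))$, and hence $R(F)\supseteq R(f(\N))$, is dense in $\Q_p$. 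The main obstacle is the middle paragraph: pinning down cleanly that "irreducible over $\Q$ of prime degree $+$ order $>$ degree" upgrades to "absolutely irreducible", and quoting (or re-deriving) an effective Noether irreducibility bound so that $M$ is effective in terms of $F$ and $m$, as Conjecture~\ref{conj-prime-degree-all-cases} demands; the rest is a routine transcription of the earlier argument.
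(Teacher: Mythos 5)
Your proposal takes essentially the same route as the paper's proof: use the hypothesis that $F$ is irreducible over $\Q$ of prime degree $q$ to argue that failure of absolute irreducibility would force $F$ to split into $q$ Galois-conjugate linear forms, hence $\o(F)\le q<m$, a contradiction; then invoke Lewis--Schuur and the Hensel/Theorem~\ref{thm6} endgame of Theorem~\ref{thm-any-degree-dense-from-some-p}. The only difference is that you make explicit (citing Noether/Bertini-type spreading out) the step that absolute irreducibility of $F$ over $\overline{\Q}$ persists to $F_p$ over $\overline{\F}_p$ for all sufficiently large $p$, which the paper's one-line proof leaves implicit; your worry about an \emph{effective} Noether bound is unnecessary, since Conjecture~\ref{conj-prime-degree-all-cases} only asks that $M$ exist and depend on $F$ and $m$, not that it be computed.
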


\begin{proof}
If $F$ is not absolutely irreducible, since $\deg(F)$ is prime, it has to be a product of linear forms, and exactly $\deg(F)$ of them, so it implies that $\o(F)\leq \deg(F)$, which is a contradiction. Hence, $F$ is absolutely irreducible and then the statement follows from Theorem~\ref{thm-Lewis-Schuur} and the end of the proof of Theorem~\ref{thm-any-degree-dense-from-some-p}.  
\end{proof}

So, we may assume that $F$ is reducible. Now, we have one more easy criterion in which  Conjecture~\ref{conj-prime-degree-all-cases} is true.

\begin{prop}\label{prop:F-has-abs-irreducible-simple-factor}
If $F$ from Conjecture~\ref{conj-prime-degree-all-cases} is of the shape $F=F_1^aF_2^bG$ (or $F=F_1G$) where $F_1$ and $F_2$ are absolutely irreducible, $a$ and $b$ are coprime positive integers, and $F_1$ and $F_2$ do not divide $G$, then Conjecture~\ref{conj-prime-degree-all-cases} holds for such $F$.
\end{prop}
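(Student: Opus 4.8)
The plan is to reduce the statement to the case over $\F_p$ for all sufficiently large $p$ and then produce a $p$-adic point giving a useful valuation. First I would invoke Theorem~\ref{thm:cone-over-Fp-p-large-over-Q} (together with the smoothness/good-reduction facts cited in the proof of Theorem~\ref{thm-any-degree-dense-from-some-p}, e.g. \cite[Theorem 3.2.1]{Poonen-book}) to ensure that for all primes $p$ exceeding some bound we have $\o(F_p)=\o(F)=m$, and moreover that the reductions $(F_1)_p$ and $(F_2)_p$ stay absolutely irreducible, of the same degrees, pairwise non-associate, and do not divide $G_p$; this uses standard semicontinuity of irreducibility in families (one may cite the effective versions behind Theorem~\ref{thm-Lewis-Schuur}, e.g. \cite{Zahid}). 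So we may work purely over $\F_p$.

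Next, since $a$ and $b$ are coprime, the idea is to find an $\F_p$-point $P$ lying on $V_+((F_1)_p)$ but not on $V_+((F_2)_p)$ nor on $V_+(G_p)$, and which is a non-singular point of $V_+((F_1)_p)$. Such a point exists for $p$ large by Theorem~\ref{thm-Lewis-Schuur} applied to the absolutely irreducible form $(F_1)_p$ with the auxiliary form $(F_2)_p\cdot G_p$ (not divisible by $(F_1)_p$). Lifting $P$ by Hensel's lemma to $\widetilde P\in\Z_p^m$, restricting $F$ to a suitable line through $\widetilde P$ in the direction in which $\partial F_1/\partial x_j$ does not vanish, we get a one-variable polynomial $f(x)=F(\widetilde P+x e_j)$ whose factorization near $x=0$ reads, up to a $p$-adic unit, $f(x)= \ell(x)^a \cdot (\text{unit})$ with $\ell$ a linear factor vanishing simply at a point reducing to $\widetilde P$ (because $F_2$ and $G$ contribute units there and $F_1$ contributes a simple root). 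Hence $f$ has a zero in $\Z_p$ of multiplicity exactly $a$. By an entirely symmetric argument at a non-singular point of $V_+((F_2)_p)$ off $V_+((F_1)_p)\cup V_+(G_p)$, we obtain a one-variable restriction of $F$ with a zero in $\Z_p$ of multiplicity exactly $b$. Since $\gcd(a,b)=1$, Theorem~\ref{thm:polynomial-coprime-multiplicities-roots} applied to one such restriction already suffices — in fact one only needs a single analytic function with two zeros of coprime multiplicities, so we can even restrict $F$ to a single line meeting both $V_+(F_1)$ and $V_+(F_2)$ transversally and away from $G$, producing one polynomial $f\in\Z_p[x]$ with a zero of multiplicity $a$ and a zero of multiplicity $b$. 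Then $R(f(\N))\subseteq R(F)$ is dense in $\Q_p$ by Theorem~\ref{thm:polynomial-coprime-multiplicities-roots}. The case $F=F_1G$ is the sub-case $b=1$ (or simply $a=1$), which already follows from Lemma~\ref{lem:simple-linear-factor}-type reasoning via a simple zero, i.e. from Theorem~\ref{thm6}.

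The main obstacle I expect is not the $p$-adic lifting but the \emph{uniformity in $p$} of the geometric input: one must be sure that for all large $p$ a line can be chosen meeting $V_+((F_1)_p)$ and $V_+((F_2)_p)$ at smooth points, missing $V_+(G_p)$, and meeting each of the first two components with the correct local intersection multiplicity (so that the restricted polynomial genuinely has zeros of multiplicities exactly $a$ and $b$, not higher). This is where Theorem~\ref{thm-Lewis-Schuur} and its effective refinements do the work, but one has to apply them carefully to the components separately and then argue that a generic line through the two chosen smooth points avoids the finitely many bad loci; the degree bounds $A(\deg F_i,\deg(\text{rest}))$ furnished there give the explicit $M$. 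A secondary, more bookkeeping-type point is checking that reduction mod $p$ does not merge or further factor $F_1$ or $F_2$ — handled by the good-reduction statement and by discarding the finitely many $p$ dividing the relevant resultants and discriminants.
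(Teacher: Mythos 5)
Your overall strategy matches the paper's: reduce to $\F_p$ for large $p$, use Theorem~\ref{thm-Lewis-Schuur} to find a smooth $\F_p$-point on $V_+((F_1)_p)$ away from $V_+((F_2)_p G_p)$ and symmetrically for $F_2$, specialise all but one variable and Hensel-lift to obtain one-variable polynomials over $\Z_p$ with zeros of multiplicities $a$ and $b$, then conclude from coprimality. The case $F=F_1 G$ via a simple zero and Theorem~\ref{thm6} is also exactly right. Up to this point you and the paper agree.

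The gap is in how you try to finish. You correctly observe that Theorem~\ref{thm:polynomial-coprime-multiplicities-roots} is stated for a \emph{single} analytic function with two zeros of coprime multiplicities, but your two restrictions are in general two \emph{different} one-variable polynomials, so the theorem does not apply verbatim. Your fix --- restrict $F$ to the single line through the two chosen $\F_p$-points --- does not work as stated: that line is \emph{unique}, so you cannot choose it ``generically'' (as your ``main obstacle'' paragraph suggests), and it can well be tangent to $V_+((F_1)_p)$ at $P_1$ or to $V_+((F_2)_p)$ at $P_2$, in which case the restricted polynomial acquires a zero of multiplicity $\ge 2a$ (or $\ge 2b$), destroying the coprimality you need. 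The paper avoids this entirely by proving a mild but essential generalisation of Theorem~\ref{thm:polynomial-coprime-multiplicities-roots} (Lemma~\ref{lem:as-for-polynomials-with-coprime-multiplicities}): if one one-variable restriction of $F$ has a zero of multiplicity $a$ and \emph{another, possibly different}, one-variable restriction has a zero of multiplicity $b$ with $\gcd(a,b)=1$, then $R(F)$ is already dense in $\Q_p$; the proof is the same argument as for a single polynomial, since $R(F)$ contains all cross-ratios of values of the two restrictions. With that lemma in hand your two separate restrictions (which you have already produced) suffice, and the single-line manoeuvre is unnecessary. I would therefore drop the single-line reduction and instead state and use the two-restriction generalisation.
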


\begin{proof}
If $F=F_1G$, again, we conclude by Theorem~\ref{thm-Lewis-Schuur} as in the end of the proof of Theorem~\ref{thm-any-degree-dense-from-some-p}.  

If $F=F_1^aF_2^bG$, then by Theorem~\ref{thm-Lewis-Schuur}, $F_1$ and $F_2$ have a smooth root over $\F_p$ which is not a root of $F_2G$ and $F_1G$ respectively. Then, as in the end of the proof of Theorem~\ref{thm-any-degree-dense-from-some-p}, we would conclude that, when we fix all variables except from one, without loss of generality, we may assume that over $\F_p$, we have 
$F(x,c_1,\ldots,c_{m-1})=(x-z_1)^af_1(x)$ and $F(d_0,y,d_2,\ldots,d_{m-1})=(y-z_2)^bf_2(y)$, where $f_1(z_1)f_2(z_2)\neq 0$. Then, by Hensel's lemma for polynomials, we have the factorisation of the same type over $\Z_p$. Hence, $F$ has the property that after specifying all but one variable, it has a root of multiplicity $a$, and the same for $b$ (but it might require another variable). Then we conclude by the following lemma.

\begin{lem}\label{lem:as-for-polynomials-with-coprime-multiplicities}
Let $F\in \Z[x_0,\ldots,x_{m-1}]$ be a form. Assume there are $c_1,\ldots,c_{m-1}\in \Z_p$, and $d_0,d_2,\ldots,d_{m-1}\in \Z_p$ such that $F(x,c_1,\ldots,c_{m-1})=(x-z_1)^af_1(x)$ and $F(d_0,y,d_2,\ldots,d_{m-1})=(y-z_2)^bf_2(y)$, where $a$ and $b$ are coprime, then this lemma is the direct consequence of Theorem~\ref{thm:polynomial-coprime-multiplicities-roots}) over $\Z_p[x]$ and $\Z_p[y]$, where $f_1(z_1)f_2(z_2)\neq 0$. Then $R(F)$ is dense in $\Q_p$.   
\end{lem}
\begin{proof}
The proof follows the same as the proof of Theorem~\ref{thm:polynomial-coprime-multiplicities-roots}; here we even have more freedom than in Theorem~\ref{thm:polynomial-coprime-multiplicities-roots}.
\end{proof}
\end{proof}

We would like to use Lemma~\ref{lem:as-for-polynomials-with-coprime-multiplicities} to prove Conjecture~\ref{conj-prime-degree-all-cases}. The difficulty is to prove that we can apply it for any form $F$ from Conjecture~\ref{conj-prime-degree-all-cases}.

Unlike the counterexample in \cite[Remark 1.6]{miska}, where it is shown that for $f(x)=x^6(x+1)^{10}(x+2)^{15}\in \Z[x]$ of prime degree 31, we have that $R(f)$ is not dense in any $\Q_p$, here we hope that their strategy will not be applicable. Assume that $p>2$, then at most one of $x,x+1,x+2$ can be divisible by $p$, so it follows that $\nu_p(f(x))$ is always divisible by 6, 10 or 15, and hence, it is not possible to obtain $\nu_p(\frac{f(x)}{f(y)})=1$. 

In our situation, assume that $F=F_1^{\alpha_1}\cdots F_k^{\alpha_k}$. We have that $\sum_{i=1}^k \alpha_i\deg(F_i)$ is a prime number. What is different in this situation from \cite[Remark 1.6]{miska} is that we have many variables, so we can ask for a condition that $V_+(F_1,\ldots,F_{k-1})\setminus V_+(F_k)$ has a point over $\F_p$ (which does not work for polynomials in one variable). Then, we could hope that in this way, we get such a zero of $F_p$ over $\F_p$, which lifts to a zero of $F$, when we specify all variables except one, over $\Q_p$, which could be of a coprime multiplicity to a zero coming from a $\F_p$-point in $V_+(F_k)$ lifted to a zero of $F$, again after specifying all variables except from one. If we are able to do that, then we can conclude by Lemma~\ref{lem:as-for-polynomials-with-coprime-multiplicities}. We can consider any two different subsets of $\{F_1,\ldots,F_k\}$ and study their common zeros to prove that the form satisfies the conditions of Lemma~\ref{lem:as-for-polynomials-with-coprime-multiplicities}; we just mentioned one instance above.

However, checking that we can apply Lemma~\ref{lem:as-for-polynomials-with-coprime-multiplicities} seems extremely difficult, because we cannot control the multiplicities of zeros of $F_i$ (when specified to one variable), especially common zeros. Thus, we leave it as a conjecture, for which we explained why we believe it is true.
    
\begin{rem}\label{rmk:conjecture-holds-2,3,5}
Conjecture \ref{conj-prime-degree-all-cases} is true for small degrees: 2, 3, and 5. Namely, for degree 2, it follows from \cite{Donnay}, as explained in the introduction, and for degree 5, it is Theorem~\ref{thm-5}. Consider an integral form $F$ of degree 3 and $\o(F)\geq 4$. By Theorem~\ref{thm:cone-over-Fp-p-large-over-Q}, $\o(F_p)\geq 4$ for large enough $p$. Then by Chevalley-Warning theorem, see, for example, \cite[P. 5, Theorem 3]{Serre}, $F_p$ has a zero in $\F_p$. By \cite[Lemma 3.5]{leep}, this implies that $F_p$ has a non-singular zero in $\F_p$. Then, as before, we conclude that $R(F)$ is dense in $\Q_p$.   
\end{rem} 

\subsection{Denseness for one prime does not imply denseness for infinitely many of them}\label{subsec:density-one-p-not-infinitely}

Now we prove Theorem \ref{thm:dense-p-not-dense-almost-everywhere}, which follows immediately from the following more precise theorem. This theorem gives explicit examples of forms $F$ for which there are only finitely many, but at least one $p$, for which $R(F)$ is dense in $\Q_p$. The strategy is to construct a form for which $\nu_p(\frac{f(x)}{f(y)})$ can be any integer (and then to match the rest by Hensel's lemma), but that for almost all primes $q$, $\nu_q(\frac{f(x)}{f(y)})$ cannot be any integer, concretely, 1. The construction was inspired by and generalises \cite[Remark 1.6]{miska}.

\begin{thm}\label{thm:dense-p-not-dense-almost-everywhere-explicit}
Let $p$ be a prime number. Consider three prime numbers $q_1<q_2<q_3$ such that $q_1q_2q_3$ is coprime to $p(p-1)$. 
\begin{itemize}
\item[(a1)] Define the polynomial
\[f(x)=x^{q_1q_2}\prod_{i=1}^{q_1}(x+p^i)^{q_1q_3}(x+p^{q_1+1})^{q_2q_3}.\]
Then $R(f)$ is dense in $\Q_p$.
\item[(a2)] For almost all primes $q$, including those greater than $p^{q_1+1}$, $R(f)$ is not dense in $\Q_q$.
\item[(b1)] Define the polynomial
\[g_2(x_1,x_2)=x_1^{q_1q_2}\prod_{i=1}^{q_1}(x_1+p^ix_2)^{q_1q_3}(x_1+p^{q_1+1}x_2)^{q_2q_3}\]
\[\prod_{i=1}^{q_3-1}(p^ix_1+x_2)^{q_1q_2}\prod_{i=q_3}^{q_3+q_2-q_1-1}(p^ix_1+x_2)^{q_1q_3}\prod_{i=q_3+q_2-q_1}^{q_3+q_2-2}(p^ix_1+x_2)^{q_2q_3}.\]
Then $R(g_2)$ is dense in $\Q_p$.
\item[(b2)] For almost all primes $q$, including those greater than $p^{q_3+q_2+q_1-1}$, $R(g_2)$ is not dense in $\Q_q$.
\item[(c1)] Let $n\in \Z_{>2}$ and $g_n(x_1,\ldots,x_n)=g_2(x_1,x_2)(x_3\cdots x_n)^{q_1q_2q_3}$. Then, $R(g_n)$ is dense in $\Q_p$. 
\item[(c2)] For almost all primes $q$, including those greater than $p^{q_3+q_2+q_1-1}$, $R(g_n)$ is not dense in $\Q_q$.
\end{itemize}
\end{thm}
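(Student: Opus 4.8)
The plan is to verify the two complementary properties for each of the three families of polynomials/forms, always following the same two-pronged strategy: (1) the denseness in $\Q_p$ should come from the fact that $f$ (resp. $g_2$, $g_n$) has, after specializing to a single variable, roots in $\Z_p$ of coprime multiplicities, so that Theorem~\ref{thm:polynomial-coprime-multiplicities-roots} (or its form-version Lemma~\ref{lem:as-for-polynomials-with-coprime-multiplicities}) applies; (2) the non-denseness in $\Q_q$ for almost all $q$ should come from showing $R(F)$ contains no element of $q$-adic valuation $1$, so Lemma~\ref{lem:dense-has-all-valuations} forbids denseness.

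For part (a1), first I would read off the multiplicities appearing in $f(x)=x^{q_1q_2}\prod_{i=1}^{q_1}(x+p^i)^{q_1q_3}(x+p^{q_1+1})^{q_2q_3}$: they are $q_1q_2$, $q_1q_3$, $q_2q_3$. Since $q_1,q_2,q_3$ are distinct primes, $\gcd(q_1q_2,q_1q_3,q_2q_3)=1$ — in fact $\gcd(q_1q_3,q_2q_3)=q_3$, $\gcd(q_1q_2,q_1q_3)=q_1$, etc., so any two of these three already have a proper gcd but the relevant statement is that there exist two among the three zeros whose multiplicities are coprime: indeed $\gcd(q_1q_2, q_1q_3)=q_1$ is not $1$, but $\gcd(q_1q_2,q_2q_3)=q_2$, hmm — so I would instead use the full strength: one can combine the zero at $0$ (multiplicity $q_1q_2$) with $\gcd$ over all three, but more cleanly, note $\gcd(q_1q_2, q_1q_3, q_2q_3)=1$ and apply Theorem~\ref{thm:polynomial-coprime-multiplicities-roots} in the version allowing more than two zeros whose multiplicities are jointly coprime (the proof of that theorem in \cite{piotr}, and of Lemma~\ref{lem:as-for-polynomials-with-coprime-multiplicities}, adapts immediately: we can realize any valuation as an integer combination $\alpha q_1 q_2 + \beta q_1 q_3 + \gamma q_2 q_3$ for integers $\alpha,\beta,\gamma$ and then push the tuple into $\N$ by density). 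The distinct roots $0, -p, \ldots, -p^{q_1}, -p^{q_1+1}$ lie in $\Z_p$ and are genuinely distinct (distinct powers of $p$), so this works; hence $R(f)$ is dense in $\Q_p$.

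For part (a2), fix a prime $q$ coprime to $p(p-1)q_1q_2q_3$ and larger than $p^{q_1+1}$. Then $q$ does not divide the pairwise differences of the roots $0,-p,\ldots,-p^{q_1+1}$ (these differences are $\pm p^i(p^{j-i}-1)$, divisible only by $p$ and by divisors of $p^{k}-1$ for $k\le q_1+1$; for $q$ large this is ruled out, and for general ``almost all'' $q$ we just exclude the finitely many dividing some $p^k-1$). Hence at most one of the factors $x, x+p, \ldots, x+p^{q_1+1}$ can be divisible by $q$ at a given integer argument, so $\nu_q(f(x))$ is always a multiple of one of $q_1q_2, q_1q_3, q_2q_3$ — in particular it is never coprime to $q_1$ and to $q_2$ and to $q_3$ simultaneously, so it is never $1$ unless it is $0$; more precisely $\nu_q(f(x))\in\{0\}\cup q_1\Z \cup q_2\Z$ depending on which factor vanishes, and since each of $q_1q_2, q_1q_3, q_2q_3$ is a multiple of at least one of $q_1, q_2$, and actually every multiplicity is a product of two of the $q_i$ hence $\geq q_1 \cdot q_2 \geq 6 > 1$, we conclude $\nu_q(f(x)/f(y))$ can never equal $1$. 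By Lemma~\ref{lem:dense-has-all-valuations}, $R(f)$ is not dense in $\Q_q$.

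For (b1) and (b2), the form $g_2(x_1,x_2)$ is the homogenization-style construction: its linear factors are $x_1$ with multiplicity $q_1q_2$, the factors $x_1+p^ix_2$ ($1\le i\le q_1$) with multiplicity $q_1q_3$, $x_1+p^{q_1+1}x_2$ with multiplicity $q_2q_3$, and then three blocks of factors $p^ix_1+x_2$ with multiplicities $q_1q_2$, $q_1q_3$, $q_2q_3$. The point of the second group of factors is that when we specialize $x_1$ (rather than $x_2$) we still see roots of multiplicities $q_1q_2, q_1q_3, q_2q_3$, so that regardless of which variable we freeze we can run the coprime-multiplicity argument; concretely, I would specialize $x_2=1$ and use the first group to get (b1) exactly as in (a1) via Lemma~\ref{lem:as-for-polynomials-with-coprime-multiplicities}. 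For (b2), fix $q>p^{q_3+q_2+q_1-1}$ coprime to $p(p-1)q_1q_2q_3$: I claim at any integer point $(x_1,x_2)$ at most one of all the linear factors $x_1, x_1+p^ix_2, p^jx_1+x_2$ is divisible by $q$. If two linear factors $\ell, \ell'$ among them both vanish mod $q$ at $(x_1,x_2)\not\equiv(0,0)$, then $\ell,\ell'$ are proportional mod $q$, i.e. $q$ divides the $2\times 2$ determinant of their coefficient vectors; those determinants are, up to sign, $p^i$, $p^i-p^j$, $p^i p^j - 1$, $p^i - p^j$, etc., all of the form $\pm p^a(p^b-1)$ or $\pm(p^{a+b}-1)$ with exponents bounded by $q_3+q_2+q_1-1$ — hence not divisible by $q$ for our choice (and for ``almost all'' $q$, excluding the finitely many dividing some such expression, together with $q=p$ and $q\mid p-1$). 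If $(x_1,x_2)\equiv(0,0)\bmod q$ then every factor contributes and $\nu_q(g_2)$ is a positive integer combination of the multiplicities, hence $\geq q_1q_2>1$; otherwise exactly one linear factor can vanish and $\nu_q(g_2(x_1,x_2))$ is a multiple of its multiplicity, which is one of $q_1q_2,q_1q_3,q_2q_3$, each $>1$. Either way $\nu_q(g_2(x_1,x_2)/g_2(y_1,y_2))\neq 1$, so Lemma~\ref{lem:dense-has-all-valuations} gives non-denseness in $\Q_q$. Finally, (c1) and (c2) are immediate: $g_n = g_2(x_1,x_2)\cdot(x_3\cdots x_n)^{q_1q_2q_3}$, so freezing $x_3=\cdots=x_n=1$ reduces (c1) to (b1), and for (c2) the extra factors only ever add a multiple of $q_1q_2q_3$ to $\nu_q$, preserving the fact that the total valuation, when nonzero, is a positive combination of the multiplicities $q_1q_2, q_1q_3, q_2q_3, q_1q_2q_3$, hence at least $q_1q_2 > 1$, so never $1$.

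The main obstacle I expect is twofold: first, making precise and correct the ``coprime multiplicities'' step — Theorem~\ref{thm:polynomial-coprime-multiplicities-roots} as literally stated wants \emph{two} zeros of coprime multiplicities, whereas here no two of $q_1q_2, q_1q_3, q_2q_3$ are coprime (each pair shares a $q_i$), so one genuinely needs the strengthened version allowing three zeros with jointly coprime multiplicities, i.e. one must check that the proof of \cite[Theorem 1.2]{piotr}, reproduced through Lemma~\ref{lem:as-for-polynomials-with-coprime-multiplicities}, goes through with an integer combination $\alpha q_1q_2 + \beta q_1q_3 + \gamma q_2q_3 = 1$; and second, the bookkeeping in (b2) of all $\binom{\#\text{factors}}{2}$ pairwise ``resultants'' (the $2\times2$ coefficient determinants) to confirm that each is $\pm p^a(p^b-1)$ with $a+b$ bounded by the stated exponent, so that excluding finitely many $q$ (and all $q\mid p(p-1)$) genuinely forces at most one linear factor to vanish mod $q$ at any point. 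Everything else — density of $\N$ in $\Z_p$, Hensel lifting of the simple-after-specialization structure, and the appeal to Lemma~\ref{lem:dense-has-all-valuations} — is routine.
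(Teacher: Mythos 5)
Your strategy for (a2), (b2), (c2) is essentially the paper's (exclude finitely many $q$ dividing $p$, $p-1$, or some $p^i-p^j$ or $p^{i+j}-1$, so that at most one linear factor can vanish at any integer point; then $\nu_q$ is always a multiple of one of $q_1q_2, q_1q_3, q_2q_3$, and any difference of two such multiples is a multiple of some $q_i>1$, never $1$), and (c1)/(c2) are routine once (b1)/(b2) are in place.

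However, there is a genuine gap in (a1) (and hence in (b1), which rests on it). You propose to apply a strengthened version of Theorem~\ref{thm:polynomial-coprime-multiplicities-roots} with three zeros of \emph{jointly} coprime multiplicities $q_1q_2,\,q_1q_3,\,q_2q_3$, asserting that the proof of \cite[Theorem 1.2]{piotr} ``adapts immediately.'' That purported strengthening is false. The paper itself records the counterexample: $h(x)=x^6(x+1)^{10}(x+2)^{15}$ has three zeros whose multiplicities $6,10,15$ are jointly coprime, yet $R(h)$ is dense in \emph{no} $\Q_p$, because for $p>2$ at most one of $x,x+1,x+2$ is divisible by $p$, so $\nu_p(h(x))$ is always a multiple of $6$, $10$, or $15$, and the difference of two such numbers is always a multiple of $2$, $3$, or $5$. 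Note this counterexample has exactly the same multiplicity pattern as $f$ (products of pairs among three primes), so the conclusion cannot be driven by the multiplicities alone. What you are missing is that $\nu_p(f(x))$ is \emph{not} a free integer combination $\alpha q_1q_2+\beta q_1q_3+\gamma q_2q_3$: if the roots are $p$-adically far apart, $x$ can be close to only one of them, and $\nu_p(f(x))$ is forced to be a single multiple of one multiplicity. The paper's $f$ is engineered precisely so that this obstruction vanishes for the chosen prime $p$: the roots $0,-p,-p^2,\ldots,-p^{q_1+1}$ are $p$-adically \emph{nested}, so when $x=p^kx_1$ is very close to $0$, the other factors contribute fixed lower-order terms $i\,q_1q_3$ and $(q_1+1)q_2q_3$, and when $y=p^ly_1-p^m$ is close to $-p^m$, the remaining factors contribute terms depending on $m$. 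The paper then directly computes $\nu_p(f(x))$ and $\nu_p(f(y))$, chooses $m\in\{1,\ldots,q_1\}$ to fix the residue of $\nu_p(f(x))-\nu_p(f(y))$ modulo $q_1$, and solves a Diophantine equation $q_2k-q_3l=N$ (solvable since $\gcd(q_2,q_3)=1$) to hit any target valuation; the coprimality of $q_1q_2$ to $p-1$ then finishes the unit part via Hensel. None of this is captured by a blanket ``three-zero coprime-multiplicity theorem,'' and the $p$-adic geometry of the root configuration is the essential ingredient you cannot bypass.
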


\begin{proof}
(a1) By Lemma \ref{lem:from-Zp-solutions-to-denseness}, it is sufficient to prove that for any $u\in \Z_p$, there are $x,y\in\Z_p$ such that $\frac{f(x)}{f(y)}=u$. Let $u=p^{nq_1+s}v$, where $n,s\in \Z_{\geq 0}$ with $s<q_1$, and $p\nmid v$. There is exactly one $m\in\{1,\ldots,q_1\}$ such that $q_2q_3(1-m)\equiv s\pmod{q_1}$. We will find $x,y\in \Z_p$ in the shape $x=p^kx_1$ and $y=p^ly_1-p^m$, where $k,l>q_1+1$ and $x_1,y_1\in\Z_p ^*$. For such $x$ and $y$, we have 
\[\nu_p(f(x))=kq_1q_2+q_1q_3\dfrac{q_1(q_1+1)}{2}+(q_1+1)q_2q_3;\]
\[\nu_p(f(y))=mq_1q_2+q_1q_3\left(\dfrac{(m-1)m}{2}+l+m(q_1-m)\right)+mq_2q_3;\]
\[
\dfrac{f(x)}{f(y)}=p^{\nu_p(f(x))-\nu_p(f(y))}\dfrac{x_1^{q_1q_2}(pf_1(x_1)+1)}{y_1^{q_1q_3}(pf_2(y_1)\pm1)},
\]
for some polynomials $f_1,f_2$ with coefficients in $\Z_p$, and for the unique choice of $+$ or $-$ in the denominator.

The choice of $m$ implies that there are $k,l>q_1+1$ such that $\nu_p(f(x))-\nu_p(f(y))=nq_1+s$ because we can divide by $q_1$ and obtain an equation $q_2k-q_3l=N$ for some $N\in\Z$. This equation has infinitely many solutions (and greater than $q_1+1$) because $q_2$ and $q_3$ are coprime. 
Now we need to prove that there are $x_1,y_1\in\Z_p ^*$ such that 
\begin{equation}\label{eq:after-division-by-p-proposition-dense-finitely-many-p}
\dfrac{x_1^{q_1q_2}(pf_1(x_1)+1)}{y_1^{q_1q_3}(pf_2(y_1)\pm1)}=v.    
\end{equation}

This directly follows from Hensel's lemma after multiplying,  fixing any $y_1\in \Z_p ^*$, and reducing the equation modulo $p$. We obtain an equation $x_1^{q_1q_2}\equiv \pm vy^{q_2q_3}\pmod{p}$, which has a simple solution because $x\mapsto x^{q_1q_2}$ is bijective map $\F_p ^ * \longrightarrow \F_p ^*$. We use that  $q_1q_2$ is coprime to $(p-1)p$.\\\\
(a2) Let $q$ be a prime such that $q\nmid p^i-p$, for any $1\leq i\leq q_1+1$. Then, for every $x\in \Z_q$, at most one of the following values is positive $\nu_q(x),\nu_q(x+p),\ldots,\nu_q(x+p^{q_1+1})$. Hence, for each $x\in \Z_q$, $\nu_q(f(x))$ is divisible by $q_1q_2$, or $q_2q_3$ or $q_1q_3$, so we cannot find $x,y\in \Z_q$ such that $\nu_q(f(x)/f(y))=1$, and $R(f)$ is not dense in $\Q_q$ by Lemma~\ref{lem:dense-has-all-valuations}. In particular, this holds for all $q$ greater than $p^{q_1+1}$.\\\\
(b1) We will prove that for any $u=p^{\nu_p(u)}v$, where $v\in \Z_p^*$, we can find $x_1, y_1\in \Z_p$ such that $\frac{g_2(x_1,1)}{g_2(y_1,1)}=u$. 

We note that $g_2(x_1,1)=f(x_1)(ph(x_1)+1)$, where $f$ is from (a1) and $h\in \Z_p[x]$ is some polynomial. Hence, $\nu_p(g_2(x_1,1))=\nu_p(f(x_1))$ for all $x_1\in \Z_p$. We can use the same strategy as in (a1) (i.e., look for $x_1,y_1\in \Z_p$ in the shape $x_1=p^kx_2$ and $y_1=p^ly_2-p^m$) to arrange that $\nu_p(\frac{g_2(x_1,1)}{g_2(y_1,1)})=\nu_p(u)$.
Then, after dividing by powers of $p$, we need to, using the notation of Equation \eqref{eq:after-division-by-p-proposition-dense-finitely-many-p}, solve the following equation
\[
\dfrac{x_2^{q_1q_2}(pf_1(x_2)+1)(ph(p^kx_2)+1)}{y_2^{q_1q_3}(pf_2(y_2)\pm1)(ph(p^ly_2-p^m)+1)}=v. 
\]
Now, as in (a1), by Hensel's lemma, we can find such $x_2$ and $y_2$ (even after fixing any $y_2\in\Z_p^*$), and hence $R(g_2)$ is dense in $\Q_p$.\\\\
(b2) First note that $g_2(x_1,x_2)$ has degree 
\[q_1q_2+q_1^2q_3+q_2q_3+(q_3-1)q_1q_2+(q_2-q_1)q_1q_3+(q_1-1)q_2q_3=3q_1q_2q_3.\]

We will now prove that for $q$ big enough, we cannot find $x_1,x_2,y_1,y_2\in\Z_p$ such that 
\[
\nu_q\left(\dfrac{g_2(x_1,x_2)}{g_2(y_1,y_2)}\right)=1.
\]
More precisely, we will prove that at least one of $q_1$, $q_2$, or $q_3$ divides $\nu_q\left(\frac{g_2(x_1,x_2)}{g_2(y_1,y_2)}\right)$, and for that, it suffices to prove that for any $x_1,x_2\in \Z_p$, two of $q_1$, $q_2$, and $q_3$ divide $\nu_q(g_2(x_1,x_2))$, so we now focus on proving the latter statement. 

We may assume that $q\nmid x_1$ or $q\nmid x_2$, because $g_2(qx_1,qx_2)=q^{3q_1q_2q_3}g_2(x_1,x_2)$. 

If $\nu_q(g_2(x_1,x_2))\neq 0$ (otherwise, the statement is true), then $q$ divides at least one of the terms $x_1$, $x_1+p^ix_2$ for $1\leq i\leq q_1+1$, or $p^jx_1+x_2$ for $1\leq j\leq q_2+q_3-2$. We now prove that $q$ cannot divide more than one of these terms. If $q$ divides at least two of them, then we distinguish between the cases. If $q\mid x_1$, then the second division implies that $q\mid p^ix_2$, for some $i$, hence $q\mid x_2$, which we assume it is not the case. If $q$ divides two terms of the same shape, then $q\mid (p^i-p^j)x_k$, where $k$ is 1 or 2, for $i,j<q_2+q_3-1$, which again implies that $q$ divides both $x_1$ and $x_2$. Similarly, if $q\mid x_1+p^ix_2$ and $q\mid p^jx_1+x_2$ for some $i\leq q_1+1$ and $j\leq q_2+q_3-2$, but then $q\mid (p^{i+j}-1)x_1$, which again implies that $q\mid x_1$ because $q>p^{q_1+q_2+q_3-1}$ and $q\mid x_2$. Thus, $R(g_2)$ is not dense in $\Q_q$.\\\\
(c1) Since $g_n(x_1,x_2,1,\ldots,1)=g_2(x_1,x_2)$, by (b1), $R(g_n)$ is dense in $\Q_p$. \\\\
(c2) Since $\nu_q(g_n(x_1,\ldots,x_n))\equiv \nu_q(g_2(x_1,x_2))\pmod{q_1q_2q_3}$ for any prime number $q$, the same argument as in (b2) shows that $R(g_n)$ is not dense for $q>p^{q_3+q_2+q_1-1}$.
\end{proof}

\subsection*{Acknowledgements} S. G.  was supported by the Spanish Ministry of Science and Innovation / State Research Agency (Ministerio de Ciencia e Innovación / Agencia Estatal de Investigación), through the Ramón y Cajal program, grant number RYC24SG-AD, MPIM guest postdoctoral fellowship program, Czech Science Foundation GA\v{C}R, grant 21-00420M, and Junior Fund grant for postdoctoral positions at Charles University during various stages of this project.  We would like to thank Du\v{s}an Dragutinovi\'{c}, Christopher Frei,
Gerard van der Geer, Christopher Keyes, Guido Lido, Steffen M\"{u}ller, Pieter Moree, Sun Woo Park, Lazar Radi\v{c}evi\'{c}, Efthymios Sofos, Jaap Top, and Pavlo Yatsyna for helpful discussions and suggestions that improved our article. Finally, 
we would like to thank the anonymous referee for the careful reading of the manuscript, numerous corrections, several useful suggestions, and a number of improvements and generalisations, especially for the whole Section~\ref{subsec:Schur-Dedekind}, from which this article benefited a lot.


\end{document}